\tikzset{number of arrows/.style={postaction={decorate,decoration = {markings,
mark=between positions 0 and {1-0.99/(#1)} step {1/(#1)} with {\arrow{Stealth[round, scale = 1.5]}}}}}}
\tikzset{
    arc arrow/.style args={
    to pos #1 with length #2}{
    decoration={
        markings,
         mark=at position 0 with {\pgfextra{
         \pgfmathsetmacro{\tmpArrowTime}{#2/(\pgfdecoratedpathlength)}
         \xdef\tmpArrowTime{\tmpArrowTime}}},
        mark=at position {#1-\tmpArrowTime} with {\coordinate(@1);},
        mark=at position {#1-2*\tmpArrowTime/3} with {\coordinate(@2);},
        mark=at position {#1-\tmpArrowTime/3} with {\coordinate(@3);},
        mark=at position {#1} with {\coordinate(@4);
        \draw[-{Latex[length=#2,bend]}]       
        (@1) .. controls (@2) and (@3) .. (@4);},
        },
     postaction=decorate,
     },
fixed arc arrow/.style={arc arrow=to pos #1 with length 2mm}     
}
\tikzset 
  {
   marking1reversed/.style=
     {decoration=
        {markings,
          mark=at position 0.5 with {\arrow[line width=0.7pt]{<}}  
        },
      postaction=decorate
     }
   }
\tikzset{number of arrows/.style={postaction={decorate,decoration = {markings,
mark=between positions 0 and {1-0.99/(#1)} step {1/(#1)} with {\arrow{Stealth[round, scale = 1.5]}}}}}}
\newcounter{dummy} \numberwithin{dummy}{section} 
\newtheorem{theorem}[dummy]{Theorem} 
\newtheorem{Definition}[dummy]{Definition}
\newtheorem{Corollary}[dummy]{Corollary}
\newtheorem{Proposition}[dummy]{Proposition}
\newtheorem{Lemma}[dummy]{Lemma}
\title{\bf A Quaternionic Integration  \\  Similar to the Complex One}
\author{Michael Parfenov}
\affil{\small \textit{Bashkortostan Branch of Russian Academy of Engineering, Ufa, Russia}} 
 \date{}
\begin{document}
\maketitle\unmarkedfntext{\!\!\!\!\!\!\!\!\!\!\!2020 Mathematics Subject Classification: 30G35. \\ Keywords:  quaternionic analysis, quaternionic contour integration,  quaternionic holomorphic functions, quaternionic Taylor’s Theorem, quaternionic Cauchy’s Theorem, Cauchy’s Integral Formula, quaternionic Laurent series, Laurent’s theorem, quaternionic Cauchy’s Residue Theorem \\Date:  Juni 19, 2025  \\e-mail address: mwparfenov@gmail.com as well as parfenm@gmx.de}
\begin{abstract}
 The conception of C- and H-representations of any  holomorphic function is further extended to the notions, definitions, lemmas and theorems of the complex integration. On this basis and the introduced notion of a H-plane, generalising the notion of a number complex plane, the theory of the quaternionic integration similar to the complex one  is built.   The complex   Taylor Theorem, Cauchy’s Theorem, Cauchy’s Integral Formula, Laurent's series, Laurent’s theorem, and Cauchy’s Residue Theorem are directly adapted to the quaternion case.         \newline   
\end{abstract}
\section{Introduction}
One of the problems of the quaternionic analysis  is to build a theory  similar to the complex one (see, for example \cite{sud:qua}, \cite{gs:nth}).
The theory of the so-called H-holomorphic functions solves this problem since the class of such quaternionic functions has the algebraic and differential properties fully identical to complex holomorphic analogues. The main principles and results of the quaternionic differentiation in this theory are briefly given in \cite{pm:onno}.

A quaternionic function is further denoted by  $$\psi\left(p\right)=\psi_{1}\left(x,y,z,u\right)+\psi_{2}\left(x,y,z,u\right)i+\psi_{3}\left(x,y,z,u\right)j+\psi_{4}\left(x,y,z,u\right)k,$$ where $x,y,z,u$ are real components of a quaternionic variable  $p=x+yi+zj+uk$; the functions $\psi_{1}\left(x,y,z,u\right),\psi_{2}\left(x,y,z,u\right),\psi_{3}\left(x,y,z,u\right),\psi_{4}\left(x,y,z,u\right)$ are real-valued and $i,j,k$ are the base quaternions of the quaternion space $\mathbb{H}$.
Using the Cayley–Dickson construction (doubling form)  we have
\begin{equation} \label{Eq1} 
 \begin{gathered}
p=a+b\cdot j\in \mathbb{H},\;\;\;\;\;\;\psi\left(p\right)=\Phi_{1}\left(a,b,\overline{a}, \overline{b} \right)+\Phi_{2\cdot}\left(a,b,\overline{a}, \overline{b} \right)\cdot j \in \mathbb{H},  \\
\,\,\, \overline{ p}=\overline{ a}-b\cdot j\subset \mathbb{H}\text{,} \;\;\;\;\;\; \overline{ \psi\left(p\right)}=\overline{\Phi_{1}\left(a,b,\overline{a}, \overline{b} \right)}\;- \Phi_{2\cdot}\left(a,b,\overline{a}, \overline{b} \right)\cdot j \subset \mathbb{H} \text{,}
\end{gathered}
\end{equation}
where
\begin{equation}  \label{Eq2}
 \begin{gathered}
 a=x+yi,\;\;\;\;\;b=z+ui, \\ 
 \overline{a}=x-yi,\;\;\;\;\; \overline{b}=z-ui,
\end{gathered}
\end{equation}
$$\Phi_{1}\left(a,b,\overline{a}, \overline{b} \right)=\psi_{1}+\psi_{2}i, \;\;\;\;\;\Phi_{2}\left(a,b,\overline{a}, \overline{b} \right)=\psi_{3}+\psi_{4}i,$$
$$\overline{ \Phi_{1}\left(a,b,\overline{a}, \overline{b} \right)}=\psi_{1}-\psi_{2}i,\;\;\;\; \overline{ \Phi_{2}\left(a,b,\overline{a}, \overline{b} \right)}=\psi_{3}-\psi_{4}i$$
are compex quantities, the \textquotedblleft  $\cdot$\textquotedblright  \,and overbar signs denote, respectively, quaternionic multiplication and complex (or quaternionic if needed) conjugation. For simplicity, we also use the short designations $\Phi_{1}$ and $\Phi_{2}$ respectively instead of $\Phi_{1}\left(a,b,\overline{a}, \overline{b} \right)$ and $\Phi_{2}\left(a,b,\overline{a}, \overline{b} \right)$.

 Let us recall some definitions, theorems and their consequences    \cite{pm:onno},\cite{pm:onde} necessary for further.
\begin{Definition} \label{de1.1}
It is assumed that the constituents $\Phi_{1}\left(a,b,\overline{a}, \overline{b} \right)$ and $\Phi_{2}\left(a,b,\overline{a}, \overline{b} \right)$ of a quaternionic function $\psi\left(p\right)=\Phi_{1}+\Phi_{2}\cdot j$ possess continuous first-order partial derivatives with respect to $a,\overline{a},b,$ and $\overline{b}$ in some open connected neighborhood $G_{4}\subset \mathbb{H}$ of a point $p\in \mathbb{H}$. Then a function $\psi\left(p\right)$ is said to be H-holomorphic (denoted by $\psi_{H}\!\left(p\right)$) at a point $p$ if and only if the functions $\Phi_{1}\left(a,b,\overline{a}, \overline{b} \right)$ and $\Phi_{2}\left(a,b,\overline{a}, \overline{b} \right)$ satisfy in $G_{4}$ the following quaternionic generalization of Cauchy-Riemann's equations:
\begin{equation} 
\label{Eq3} \left\{
\begin{aligned}
1)\,\,\,\,(\,\partial_{a}\Phi_{1}\!\!\mid \,\,=(\,\partial_{\overline{b}}\overline{\Phi_{2} }\! \mid, \,\,\,\,\,\,\,\,\,\,\,\,\,\,2)\,\,\,\,(\,\partial_{a}\Phi_{2}\!\!\mid &=-\,(\,\partial_{\overline{b}}\overline{\Phi_{1} }\! \mid 
,\\
3)\,\,\,\,(\,\partial_{a}\Phi_{1}\!\!\mid \,\,=(\,\partial_{b}\Phi_{2}\! \mid, \,\,\,\,\,\,\,\,\,\,\,\,\,\,4)\,\,\,\,(\,\partial_{\overline{a}}\Phi_{2}\!\!\mid &=-\,(\,\partial_{\overline{b}}\Phi_{1}\!\! \mid .
\end{aligned}
\right. 
\end{equation}
\end{Definition}
Here $\partial_{i},\,i=a,\overline{a},b,\overline{b},$ denotes the partial derivative with respect to $i$. The brackets $\left(\cdots{}\!\!\mid\right.$ with the closing vertical bar indicate that the transition $a=\overline{a}=x$ (to 3D space: $p=x+yi+zj+uk\rightarrow p_{3}=x+zj+uk$) has been already performed in expressions enclosed in brackets. Equations (\ref{Eq3}-1) and (\ref{Eq3}-2) have the components relating to the left quaternionic derivative  and equations (\ref{Eq3}-3) and (\ref{Eq3}-4) to the right one \cite[\!p.~\!\!4]{pm:onno}. The requirement  $a=\overline{a}=x$ provides a possibility of joint implementation of equations for the left and the right quaternionic derivatives.  Equations (\ref{Eq3}-1) and (\ref{Eq3}-3) as well as (\ref{Eq3}-2) and (\ref{Eq3}-4) become, respectively, identical after the transition to 3D space \cite[\!\!p.~\!\!4]{pm:onno}, i.e. the left derivative becomes equal to the right one.

We see that the H-holomorphy conditions (\ref{Eq3}) are defined so that during the check of the quaternionic holomorphy of any quaternionic function we have to do the transition  $a=\overline{a}=x$ in already calculated expressions for the partial derivatives of the functions $\Phi_{1}\left(a,b,\overline{a}, \overline{b} \right)$ and  $\Phi_{2}\left(a,b,\overline{a}, \overline{b} \right)$ and their complex conjugations.   Nevertheless, this doesn’t mean that we deal with triplets, since the transition $a=\overline{a}=x$ (or $y=0$) can not be initially done for quaternionic variables and functions. Otherwisewe we would lose a division operation. Any quaternionic function remains the same 4-dimensional quaternionic function regardless of whether we check its holomorphy or not. Simply put, the H-holomorphic functions are 4-dimensional quaternionic functions for which the partial derivatives of components of  the Cayley–Dickson doubling form satisfy equations (\ref{Eq3}) after the transition to 3D space. More clearly, they are those quaternionic functions whose the left and the right derivatives become equal after the transition to 3D space. 

One can construct H-holomorphic functions from their C-holomorphic analogues by using the following theorem.
\begin{theorem} \label{th1.2}
Let a complex-valued function $\psi_{C}\!\left(\xi\right):G_{2}\rightarrow \mathbb{C}$ of a complex variable $\xi$ be C-holomorphic everywhere in an open connected  set $G_{2}\subseteq \mathbb{C}$, except, possibly, \!at certain singularities. Then a H-holomorphic function $\psi_{H}\!\left(p\right)$ of the same kind as $\psi_{C}\!\left(\xi\right)$ can be constructed (without change of a kind of function) from $\psi_{C}\!\left(\xi\right)$ by replacing a complex variable $\xi\in G_{2}$ in an expression for $\psi_{C}\!\left(\xi\right)$ by a quaternionic variable $p\in G_{4}\subseteq \mathbb{H}$, where $G_{4}$ is defined (except, possibly, at certain singularities) by the relation $G_{4}\supset G_{2}$ in the sense that $G_{2}$  exactly follows from $G_{4}$ upon transition from $p$ to $\xi$.  
\end{theorem}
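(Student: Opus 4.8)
The plan is to reduce the statement to the behaviour of power series and then to single monomials, exploiting that the H-holomorphy conditions (\ref{Eq3}) are $\mathbb{R}$-linear in the pair $(\Phi_1,\Phi_2)$ and stable under locally uniform limits of the functions together with their first partials. Concretely, away from its singularities $\psi_C\!\left(\xi\right)$ is complex-analytic, so on a neighbourhood of each regular point $\xi_0\in G_2$ it is the sum of a Taylor series $\sum_{n\ge 0} c_n\left(\xi-\xi_0\right)^n$, while near an isolated singularity it is represented by a Laurent series $\sum_{n\in\mathbb{Z}} c_n\left(\xi-\xi_0\right)^n$. The prescribed substitution $\xi\mapsto p$ turns these into $\sum_n c_n\left(p-p_0\right)^n$, where $p_0$ is the quaternion associated with $\xi_0$; since the monomials $\left(p-p_0\right)^n$ are unambiguous, this also settles the well-definedness of the substitution in spite of non-commutativity. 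Thus it suffices to prove that each building block $\left(p-p_0\right)^n$ yields an H-holomorphic pair and that the passage to the series preserves (\ref{Eq3}).

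For the monomials I would argue by the algebraic closure of the H-holomorphic class. Writing $p^{n}=\Phi_1^{(n)}+\Phi_2^{(n)}\!\cdot j$ and using the Cayley--Dickson multiplication rules $j\,a=\overline{a}\,j$, $j\,b=\overline{b}\,j$ and $j^2=-1$, one obtains the recursion
\begin{equation*}
\Phi_1^{(n+1)}=\Phi_1^{(n)}a-\Phi_2^{(n)}\overline{b},\qquad
\Phi_2^{(n+1)}=\Phi_1^{(n)}b+\Phi_2^{(n)}\overline{a},
\end{equation*}
with $\left(\Phi_1^{(1)},\Phi_2^{(1)}\right)=\left(a,b\right)$. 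Since this recursion is exactly the component form of right multiplication by $p$, the induction step amounts to the statement that the product of an H-holomorphic function with the (manifestly H-holomorphic) identity $p$ is again H-holomorphic; invoking the algebraic properties of the H-holomorphic class established in \cite{pm:onno},\cite{pm:onde}, together with the base case $\left(\Phi_1,\Phi_2\right)=\left(a,b\right)$, gives (\ref{Eq3}) for every $p^{n}$ after the transition $a=\overline{a}=x$. Negative powers are handled the same way through $p^{-1}=\overline{p}\,\lvert p\rvert^{-2}$, the reciprocal generating the Laurent part together with its singularity.

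It then remains to reassemble the series. Finite partial sums are H-holomorphic by the linearity of (\ref{Eq3}); the full function follows by differentiating the Taylor or Laurent expansion term by term, which is legitimate on compact subsets of the disc or annulus of convergence, and by letting the continuity of the first partials required in Definition~\ref{de1.1} carry the limit through each of the four equations. The assertion $G_4\supset G_2$ is finally a matter of bookkeeping: $G_4$ is the quaternionic region on which the substituted series converges, and performing the transition $p\to\xi$ (that is, $b=0$, $\overline{a}=a$) restricts it back to $G_2$ while matching regular points to regular points and singularities to singularities; because the substitution is literal, the constructed $\psi_H\!\left(p\right)$ is of the same kind as $\psi_C\!\left(\xi\right)$.

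I expect the genuine obstacle to lie not in the monomial computation but in controlling the complex coefficients $c_n$ through the C- and H-representation correspondence: a constant factor that is harmless in $\mathbb{C}$ interacts non-trivially with the conjugations $\overline{a},\overline{b}$ appearing in (\ref{Eq3}), so the precise way in which a coefficient is carried into the H-representation — and the verification that this is compatible with the recursion above — is the delicate point that the representation results of \cite{pm:onno},\cite{pm:onde} are meant to supply.
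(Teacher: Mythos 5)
First, a point of comparison: the paper does not actually prove this statement. Theorem \ref{th1.2} is recalled from \cite{pm:onno} and \cite{pm:onde} as an imported ingredient, so there is no in-paper argument to measure yours against. Judged on its own terms, your skeleton is the natural one and much of it is sound: the Cayley--Dickson recursion for $p^{n+1}=p^{n}\cdot p$ is computed correctly, the base case $\left(\Phi_1,\Phi_2\right)=\left(a,b\right)$ does satisfy (\ref{Eq3}), closure of the H-holomorphic class under products and quotients is available from the cited references, and term-by-term differentiation of quaternionic power series is exactly what \cite[Proposition 3.9]{pm:onde} supplies.

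The genuine gap is the one you relegate to your final paragraph: the complex coefficients $c_n$. The conditions (\ref{Eq3}) are $\mathbb{R}$-linear but not $\mathbb{C}$-linear in $\left(\Phi_1,\Phi_2\right)$. Concretely, for $f\left(p\right)=i\cdot p=ia+ib\cdot j$ one has $\partial_{a}\Phi_1=i$, while $\overline{\Phi_2}=-i\,\overline{b}$ gives $\partial_{\overline{b}}\overline{\Phi_2}=-i$, so (\ref{Eq3}-1) fails: multiplication by a non-real complex constant destroys H-holomorphy. Hence $\sum_{n} c_n\left(p-p_0\right)^{n}$ obtained by the literal substitution $\xi\mapsto p$ is \emph{not} H-holomorphic whenever some $c_n$ has nonzero imaginary part, and your reduction to the monomials $\left(p-p_0\right)^{n}$ plus ``linearity'' does not close. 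The repair is not a detail to be outsourced but part of the construction itself: the complex unit $i$ occurring in coefficients (and in $\xi_0$, hence in $p_0$) must simultaneously be replaced by the quaternionic unit $I=\left(yi+zj+uk\right)/\sqrt{y^{2}+z^{2}+u^{2}}$, as prescribed in Proposition \ref{pr1.5}; Section 3.3 of the paper then verifies that $I\left(a,b,\overline{a},\overline{b}\right)$ is itself H-holomorphic with vanishing full derivative, which is what makes coefficients of the form $\eta\left(I\right)$ admissible in power series. Without building that replacement into your substitution step, the argument you give establishes the theorem only for functions whose Taylor or Laurent coefficients are real.
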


We also need the following theorem presented in \cite[,\!\!\!p.\!\!~7]{pm:onno}.
\begin{theorem} \label{th1.3}
The quaternionic generalization of the complex derivative has the following expression for the full (\textit{uniting the left and right derivatives}) quaternionic derivative  of the $k'\text{th}$ order: 
\begin{equation}
\label{Eq4} \psi_H^{\left(k\right)}\!\!\left(p\right)=\Phi_1^{(k)}+\Phi_2^{(k)}\cdot j, 
\end{equation}
where the constituents $\Phi_1^{(k)}$ and $\Phi_2^{(k)}$ are expressed by
\begin{equation} \label{Eq5}
\Phi_1^{(k)}=\partial_{a}\Phi_1^{(k-1)}\!\!+\partial_{\overline{a}}\Phi_1^{(k-1)} \,\,\,\text{and}\,\,\,\,\,\,\,\Phi_2^{(k)}=\partial_{a}\Phi_2^{(k-1)}\!\!+\partial_{\overline{a}}\Phi_2^{(k-1)};
\end{equation}
and
$\Phi_1^{(k-1)} \text{and}\,\, \Phi_2^{(k-1)}$ are the constituents of the $\left(k-1\right)^{'}\!\text{th}$ full derivative of  $\psi_{H}\left(p\right)$ represented in the Cayley–Dickson doubling form as $ \psi_H^{\left(k-1\right)}\!\!\left(p\right)=\Phi_1^{(k-1)}+\Phi_2^{(k-1)}\cdot j,  k\geq1;$ $\Phi_1^{(0)}=\Phi_{1}\!\!\left(a,b,\overline{a}, \overline{b} \right) \text{and}\,\, \Phi_2^{(0)}=\Phi_{2}\!\!\left(a,b,\overline{a}, \overline{b} \right).$

 \textit{If a quaternion function $\psi\left(p\right)$ is once H-differentiable in $G_{4}\subset \mathbb{H},$ then it possesses the full quaternionic derivatives of all orders in $G_{4}$ (except, possibly, at certain singularities), each one H-differentiable}. 
\end{theorem}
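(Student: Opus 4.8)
The plan is to reduce the statement to the corresponding classical fact for complex holomorphic functions, exploiting the correspondence furnished by Theorem~\ref{th1.2}, rather than reproducing the contour-integral argument by which one usually proves that complex holomorphy implies analyticity. First I would use Theorem~\ref{th1.2}, together with the invertibility of the substitution $\xi\leftrightarrow p$ (the transition $p\to\xi$ recovering $\psi_C$ from $\psi_H$), to associate with the given once-H-differentiable function $\psi_H(p)$ its complex analogue $\psi_C(\xi)$, which is C-holomorphic on the set $G_2$ obtained from $G_4$ by transition. The value of this reduction is that the classical theory already guarantees that $\psi_C$ possesses complex derivatives $\psi_C^{(k)}$ of all orders throughout $G_2$, except at the isolated singularities, each again C-holomorphic and each real-analytic.

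The crux is to show that the full quaternionic derivative defined by (\ref{Eq4})--(\ref{Eq5}) is compatible with this correspondence, i.e. that the operation $\psi_H\mapsto\psi_H^{(1)}$ is carried into complex differentiation $\psi_C\mapsto\psi_C'$ under the substitution. Concretely, writing $a=x+yi$ so that $\partial_a+\partial_{\overline a}=\partial_x$, I would verify that the prescription $\Phi_1^{(1)}=(\partial_a+\partial_{\overline a})\Phi_1$ and $\Phi_2^{(1)}=(\partial_a+\partial_{\overline a})\Phi_2$ yields, after the transition $a=\overline a=x$, precisely the constituents of the H-analogue of $\psi_C'$. Here I would use the H-holomorphy equations (\ref{Eq3}) satisfied by $\Phi_1,\Phi_2$ to remove the $\overline a,\overline b$-dependence and identify the outcome as the real-directional derivative, exactly as the complex equation $\partial_{\overline\xi}\psi_C=0$ reduces $\psi_C'$ to $\partial_\xi\psi_C$. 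A guiding check is the monomial case $\psi_C(\xi)=\xi^2$, for which $\psi_H(p)=p^2$ gives $\Phi_1=a^2-b\overline b$, $\Phi_2=b(a+\overline a)$ and (\ref{Eq5}) returns $\Phi_1^{(1)}=2a$, $\Phi_2^{(1)}=2b$, i.e. $\psi_H^{(1)}=2p$, matching $\psi_C'=2\xi$.

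Granting this compatibility, the argument closes by induction on $k$. Assuming $\psi_H^{(k-1)}$ is the H-holomorphic function corresponding to $\psi_C^{(k-1)}$, formula (\ref{Eq5}) exhibits $\psi_H^{(k)}$ as the function corresponding to $\psi_C^{(k)}$; since $\psi_C^{(k)}$ is C-holomorphic, Theorem~\ref{th1.2} shows that its constituents satisfy (\ref{Eq3}), so $\psi_H^{(k)}$ is again H-holomorphic, hence H-differentiable. As $\psi_C^{(k)}$ exists for every $k$ on $G_2$ away from the singularities, so does $\psi_H^{(k)}$ on $G_4$, the exceptional set being the image of the singularities of $\psi_C$ under $\xi\to p$; this is exactly the asserted conclusion.

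The step I expect to be the main obstacle is establishing the compatibility of (\ref{Eq5}) with complex differentiation in full generality rather than on examples, since it requires combining all four CR-type equations (\ref{Eq3}) with the existence and continuity of the relevant second- and higher-order partials of $\Phi_1,\Phi_2$. Such higher-order continuity is not granted by Definition~\ref{de1.1}, which assumes only first-order continuity, and must instead be imported from the real-analyticity of the complex analogue $\psi_C$ through the substitution. I would therefore take care to invoke the smoothness of $\psi_C$ before interchanging any mixed second derivatives in the verification, so that the inductive passage from order $k-1$ to order $k$ is legitimate.
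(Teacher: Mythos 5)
There is nothing in this paper to compare your argument against: Theorem \ref{th1.3} is stated here without proof, being imported verbatim from the earlier work \cite[p.~7]{pm:onno}. So your proposal must be judged on its own merits, and as it stands it has one genuine gap. Your entire reduction rests on associating to the given once-H-differentiable $\psi(p)$ a C-holomorphic analogue $\psi_C(\xi)$ and then borrowing infinite differentiability from classical complex analysis. But Theorem \ref{th1.2} runs in the opposite direction only: it manufactures an H-holomorphic function \emph{from} a given C-holomorphic one. The theorem you are proving quantifies over an arbitrary quaternionic function that is once H-differentiable in $G_4$ in the sense of Definition \ref{de1.1}; nothing stated in this paper guarantees that every such function is the image of some $\psi_C$ under the substitution $\xi\to p$, and "invertibility of the substitution" is not a theorem you can cite. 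Without that surjectivity claim the induction never starts for a general $\psi_H$. Relatedly, the "compatibility" step --- that $(\partial_a+\partial_{\overline a})$ applied to the constituents reproduces, after the transition $a=\overline a=x$, the constituents of the derivative and yields a function again satisfying (\ref{Eq3}) --- is essentially the entire content of the theorem, and you defer its general verification to a single monomial check. That check is correct ($\Phi_1=a^2-b\overline b$, $\Phi_2=b(a+\overline a)$, giving $\psi_H^{(1)}=2p$), and the identity $\partial_a+\partial_{\overline a}=\partial_x$ is right, but an argument that works only for functions already known to come from power series does not establish the stated implication "once H-differentiable $\Rightarrow$ all orders exist."

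On the positive side, you correctly isolated the real analytic obstacle: Definition \ref{de1.1} grants only continuous first-order partials of $\Phi_1,\Phi_2$, so iterating (\ref{Eq5}) requires higher-order regularity that must be proved, not assumed; any honest proof (presumably the one in \cite{pm:onno}) has to supply this, e.g.\ by an integral-formula or bootstrap argument at the level of the quaternionic (or underlying complex) structure rather than by appeal to a complex analogue whose existence is the thing in question. If you can first prove that every H-holomorphic function locally admits a C-holomorphic counterpart of the same functional form, your reduction would go through; absent that lemma, the proof is circular at its first step.
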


This theorem  leads  \cite{pm:onno} to the following   
\begin{Corollary} \label{co1.4} 
(The similarity between the differentiating rules for C-holomorhic and H-holomorhic functions).\! All expressions for derivatives of a H-holomorphic function $\psi_{H}\!\left(p\right)$ of the same kind as a C-holomorphic analogue $\psi_{C}\!\left(\xi\right)$  have the same forms as the expressions for corresponding derivatives of a function $\psi_{C}\!\left(\xi\right).$
\end{Corollary}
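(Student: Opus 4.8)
The plan is to prove the stronger, fully explicit statement behind the corollary: if $\psi_H(p)$ is obtained from $\psi_C(\xi)$ by the substitution $\xi\to p$ of \Cref{th1.2}, then for every order $k$ one has $\psi_H^{(k)}(p)=\psi_C^{(k)}(\xi)\big|_{\xi\to p}$, i.e. the $k$-th quaternionic derivative is literally the $k$-th complex derivative with $\xi$ replaced by $p$. Since ``same form'' means exactly this form-preservation under substitution, the corollary follows immediately. I would establish the identity $\psi_H^{(k)}(p)=\psi_C^{(k)}(p)$ by induction on $k$, feeding \Cref{th1.3} (which supplies the derivatives of all orders through \eqref{Eq5}) into a single computational observation about the operator $\partial_a+\partial_{\overline a}$.

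That observation is the heart of the argument. Writing $a=x+yi$, $\overline a=x-yi$ and holding $b,\overline b$ fixed, the chain rule gives $\partial_x=\partial_a\,\partial_x a+\partial_{\overline a}\,\partial_x\overline a=\partial_a+\partial_{\overline a}$, so the operator appearing in \eqref{Eq5} is nothing but the real partial derivative $\partial_x$. Because $\partial_x$ is $\mathbb{R}$-linear and commutes with right multiplication by the constant $j$, applying \eqref{Eq5} to both Cayley--Dickson constituents and reassembling yields $\psi_H^{(k)}(p)=\partial_x\bigl(\Phi_1^{(k-1)}+\Phi_2^{(k-1)}\!\cdot j\bigr)=\partial_x\psi_H^{(k-1)}(p)$, and therefore $\psi_H^{(k)}(p)=\partial_x^{\,k}\psi_H(p)=\partial_x^{\,k}\psi_C(p)$, the last equality by \Cref{th1.2}.

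It then remains to evaluate $\partial_x$ on the substituted function, i.e. to prove the ``chain rule'' $\partial_x\bigl[\psi_C(p)\bigr]=\psi_C'(\xi)\big|_{\xi\to p}$. I would first verify it on monomials: since $\partial_x p=1$ is central and $p$ commutes with itself, the quaternionic product rule gives $\partial_x(p^{\,n})=\sum_{m=0}^{n-1}p^{\,m}(\partial_x p)\,p^{\,n-1-m}=n\,p^{\,n-1}$, which matches $\tfrac{d}{d\xi}\xi^{\,n}=n\xi^{\,n-1}$ after substitution; the same centrality also gives $\partial_x(p^{-1})=-p^{-1}(\partial_x p)p^{-1}=-p^{-2}$, so negative exponents are covered as well. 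Extending by $\mathbb{R}$-linearity and term-by-term differentiation to the (local) power- or Laurent-series expansion of $\psi_C$ gives the identity in general; note that the series coefficients, though they need not commute with $p$ or $j$, pass through the real operator $\partial_x$ untouched. Iterating this identity $k$ times converts $\partial_x^{\,k}\psi_C(p)$ into $\psi_C^{(k)}(\xi)\big|_{\xi\to p}$ and closes the induction.

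The main obstacle is precisely this interchange of the substitution $\xi\to p$ with $\partial_x$. One must justify differentiating the series for $\psi_C$ term by term in the quaternionic setting and keep careful track of the noncommutativity: the product-rule computation for $\partial_x(p^{\,n})$ succeeds only because $\partial_x p$ is the central element $1$, and the coefficients survive $\partial_x$ only because it is $\mathbb{R}$-linear rather than $\mathbb{H}$-linear. For functions carrying singularities, the same conclusion is obtained on each connected component of the domain of holomorphy, away from the singularities, where $\psi_C$ again admits a convergent local series representation so that the monomial computation applies verbatim.
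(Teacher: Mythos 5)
Your proposal is correct in substance, but it supplies a proof where the paper gives essentially none: the paper simply states that Corollary \ref{co1.4} ``leads'' from Theorem \ref{th1.3} and defers to the earlier reference, offering no argument in the text. Your route makes the mechanism explicit, and the key step --- recognizing that the operator $\partial_a+\partial_{\overline a}$ in \eqref{Eq5} is just the real partial derivative $\partial_x$, so that $\psi_H^{(k)}(p)=\partial_x^{\,k}\psi_H(p)$, and then checking $\partial_x(p^{\,n})=n\,p^{\,n-1}$ using the centrality of $\partial_x p=1$ --- is a clean and genuinely informative reduction that the paper never writes down. What your approach buys is a verifiable statement of what ``same form'' means (form-preservation under the substitution $\xi\to p$) together with an actual computation certifying it on monomials, negative powers, and convergent series; what the paper's (implicit) approach buys is brevity, at the cost of leaving the corollary as an assertion. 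Two points deserve care if you write this up fully. First, your claim that series coefficients ``pass through $\partial_x$ untouched'' needs the observation that after the substitution of Proposition \ref{pr1.5} the complex coefficients become elements of $\mathbb{R}+\mathbb{R}I$, and $I=\frac{yi+zj+uk}{\sqrt{y^2+z^2+u^2}}$ is independent of $x$ and commutes with $p$ (both lie in the same slice), so $\partial_x(c\,p^{\,n})=c\,n\,p^{\,n-1}$ indeed holds; the paper's Section 3.3 computation showing $I$ has vanishing full derivative is the analogous safeguard. Second, your induction tacitly identifies the paper's ``full derivative'' of \eqref{Eq4}--\eqref{Eq5} with $\partial_x$ applied to the Cayley--Dickson constituents \emph{before} any transition $a=\overline a=x$; that reading is consistent with how the paper uses \eqref{Eq5} (e.g.\ in the $I'(p)=0$ computation), but it is worth stating explicitly since the holomorphy conditions \eqref{Eq3} do involve that transition while the derivative formula does not.
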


For example, if the second derivative of the C-holomorphic function $\psi_{C}\!\left(\xi\right)=\xi^{k},\,\,k=1,2,3,\dots,$ is $\psi_C^{\left(2\right)}\!\left(\xi\right)=  k\left(k-1\right)\xi^{k-2},$  then the second full derivative of the H-holomorphic function $\psi_{H}\!\left(p\right)=p^{k}$ is $\psi_H^{\left(2\right)}\!\left(p\right) =k\left(k-1\right)p^{k-2}.$

In accordance with \cite[,\!\!\!p.~\!6]{pm:onde} we can make the following
\begin{Proposition} \label{pr1.5}
 One can consider any holomorphic function  as one holistic notion, having two representations: a C-representation (in the case of C-holomorphic functions) and a  H-representation (in the case of H-holomorphic functions).  The transition from the C-representation to the H-representation is directly carried out by replacing a complex variable $\xi\in G_{2}\subseteq \mathbb{C}$ (as a single whole) in an expression for a C-holomorphic function $\psi_{C}\left(\xi\right)$ by a quaternionic variable $p\in G_{4}\subseteq \mathbb{H}$, where $G_{2}$ and $G_{4}$ are open domains defined just as in Theorem \ref{th1.2}. At that  the complex imaginary unit $i$ (if it is encountered) must be replaced by the quaternionic imaginary unit $I=\frac{yi+zj+uk}{\sqrt{y^{2}+z^{2}+u^{2}}}.$ 
\end{Proposition}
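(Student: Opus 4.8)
The plan is to reduce the statement to the already-established Theorem~\ref{th1.2} together with one purely algebraic observation about the quaternionic unit $I$. Theorem~\ref{th1.2} already guarantees that replacing the complex variable $\xi$ by the quaternionic variable $p$ in the expression for a C-holomorphic $\psi_{C}(\xi)$ produces a H-holomorphic function $\psi_{H}(p)$ of the same kind on a domain $G_{4}\supset G_{2}$. Hence the only genuinely new content of the Proposition is the rule $i\mapsto I$ for the explicit occurrences of the imaginary unit (which in fact also fixes how the $i$ hidden inside complex constants is to be read), together with the assertion that the two procedures are mutually consistent, so that $\psi_{C}$ and $\psi_{H}$ are two faces of a single object.

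First I would record the basic algebraic fact about $I=(yi+zj+uk)/\sqrt{y^{2}+z^{2}+u^{2}}$. A direct computation using $i^{2}=j^{2}=k^{2}=-1$ and the anticommutation of the base quaternions gives $I^{2}=-1$. Consequently, for each fixed non-real $p$, the real span $\mathbb{R}\oplus\mathbb{R}I$ is a commutative, associative subalgebra of $\mathbb{H}$, and the assignment $x+r\,i\mapsto x+r\,I$ (with $r=\sqrt{y^{2}+z^{2}+u^{2}}$) is an isomorphism of this plane onto $\mathbb{C}$. Writing $p=x+\sqrt{y^{2}+z^{2}+u^{2}}\,I$ exhibits $p$ as the image of the complex number $x+\sqrt{y^{2}+z^{2}+u^{2}}\,i$ under this isomorphism, exactly paralleling $\xi=\mathrm{Re}\,\xi+i\,\mathrm{Im}\,\xi$.

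Next I would push any C-holomorphic expression through this isomorphism. Since a C-holomorphic function is built from $\xi$ and complex constants by the holomorphic operations (sums, products, quotients, compositions and, locally, convergent power series), and since each of these operations is preserved by an algebra isomorphism, the substitution $\xi\mapsto p$, $i\mapsto I$ lands entirely inside the commutative plane $\mathbb{R}\oplus\mathbb{R}I$ and reproduces, slice by slice, the complex computation. In particular the value of the constructed function on this plane equals the image of $\psi_{C}$, which identifies the restriction of the H-representation with the C-representation and shows that the rule $i\mapsto I$ is forced (it is the unique image of $i$ under the plane isomorphism) and independent of the particular algebraic form chosen for $\psi_{C}$. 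Combining this with Theorem~\ref{th1.2}, which supplies H-holomorphy on all of $G_{4}$, and with Corollary~\ref{co1.4}, which matches the two sets of differentiation rules, establishes that the C- and H-representations are two representations of one holistic holomorphic object.

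The main obstacle, and the point requiring care, is that $I$ is not a global constant like the complex $i$: it depends on the evaluation point $p$ through the direction of its vector part, and it is undefined on the real axis where $y=z=u=0$. I would therefore argue the consistency pointwise, fixing $p$, working inside its own plane $\mathbb{R}\oplus\mathbb{R}I$, and then letting $p$ vary, while treating the real axis as a removable locus on which the function reduces to its ordinary complex (indeed real) value and on which H-holomorphy is recovered by continuity from Theorem~\ref{th1.2}. Verifying that the direct substitution $\xi\mapsto p$ of Theorem~\ref{th1.2} agrees with the refined substitution $\xi\mapsto p,\ i\mapsto I$ of the Proposition is then immediate, because $p$ itself already lies in the plane $\mathbb{R}\oplus\mathbb{R}I$ with vector part $\sqrt{y^{2}+z^{2}+u^{2}}\,I$.
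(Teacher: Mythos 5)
Your proposal is correct in substance, but it argues along a genuinely different line from the paper. The paper does not give a formal proof of Proposition~\ref{pr1.5} at all; it follows the statement with a short justification paragraph that leans entirely on previously established facts: the algebraic properties of H-holomorphic functions coincide with the complex ones (in particular, their quaternionic product behaves commutatively and the left quotient equals the right one, cited from the author's earlier work), the differentiation rules coincide by Corollary~\ref{co1.4}, and the complex multiplication rule and Cauchy--Riemann equations are recovered from their quaternionic generalizations \eqref{Eq3} upon restriction to the complex plane. You instead build the identification from the bottom up: you verify $I^{2}=-1$, observe that $\mathbb{R}\oplus\mathbb{R}I$ is a commutative subalgebra isomorphic to $\mathbb{C}$ containing $p$, and push the whole C-holomorphic expression through that isomorphism slice by slice, invoking Theorem~\ref{th1.2} only to supply H-holomorphy on all of $G_{4}$. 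Your route is more self-contained and makes explicit \emph{why} the substitution $i\mapsto I$ is forced (it is the unique image of $i$ under the plane isomorphism), something the paper leaves implicit; it is essentially the slice-based viewpoint of the Gentili--Struppa reference the paper cites. You also flag the genuine subtlety that $I$ varies with $p$ and degenerates on the real axis; the paper defers this point to Section~3.3, where it shows that $I$, viewed as a function of $a,\overline a,b,\overline b$, is itself H-holomorphic with vanishing full derivative and hence may be treated as a constant. The paper's appeal to prior results buys brevity; your argument buys transparency, at the cost of having to handle the real axis and the pointwise dependence of $I$ by hand.
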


 The basis for this is as follows. Algebraic properties of the holomorphic functions in both representations are the same, including the fact that the quaternionic multiplication of H-holomorphic functions behaves as commutative and correspondingly the left quotient of  H-holomorphic functions is equal to the right one \cite[\!p.\!\!~8]{pm:onno}. The  differentiation  rules are also the same (Corollary \ref{co1.4}).  The rule for the complex multiplication follows from the rule for the quaternionic multiplication as well as the known complex Cauchy-Riemann equatians follow from their quaternionic generalization (\ref{Eq3}) upon the transition from the quaternion space to the complex plane \cite[,\!\!\!p.~\!\!6]{pm:onde}.
 We denote the quaternionic imaginary unit by $I$, as in \cite[,\!\!\!p.~\!\!280]{gs:nth};  this is more convenient here than the previously used (see, for example, \cite[p.p.~6,11]{pm:onno})  designation $r$.

 Since the  integration is the procedure inverse to the differentiation, and rules of the differentiation are identical for C- and H-holomorphic functions, the similarity of general definitions, theorems and formulas of quaternionic and complex integration of holomorphic functions should be expected. 
 
 \hspace{20mm}
 
 \textit{The aim of this article is to show that general principles, definitions, and theorems of a quaternionic integration of H-holomorphic functions  can be built  fully identical to complex ones.  In more detail, we want to show that all the results of the theory of complex integration can be rewritten as quaternion ones by replacing the complex variables with quaternion ones..} 
 
  \hspace{20mm}
 
As the aim implies, we will mostly repeat known complex definitions and proofs adapting them to the quaternionic case. To show such a possibility is one of tasks.  Within the framework of a short article  we cannot consider the theory of quaternionic integration in full; we only consider some problems. It seems there's not much novelty but a possibility  of such an adaptation to the quaternionic case should be considered at some point.
\section{Quaternionic Integration and Cauchy’s Theorem}
The introduced C- and H-representations of one holistic notion of a holomorphic function (Proposition \ref{pr1.5})  can be conveniently extended to paths and contour integrals when adapting all definitions and notions of the complex integration to the quaternionic area.
\subsection{Paths and contours in the quaternion space}

In the complex plane a path from $\xi_{0}$ to $\xi_{1}$ is defined as follows \cite[,\!\!\!p.~\!\!35]{st:ta}:  a path is a continuous function 
\begin{equation} \label{Eq6}
\gamma\left(t\right) :\left[\alpha,\beta\right]\rightarrow\mathbb{C},
\end{equation}
where $\left[\alpha,\beta\right]$ is a real interval. So, for each $\alpha\leq t\leq\beta$, $\gamma\left(t\right)$ is a point on the path. We say that the path $\gamma\left(t\right)$ starts at $\gamma\left(\alpha\right)$ and ends at $\gamma\left(\beta\right).$ We regard a path as a set of points in $\mathbb{C}$, i.e. we identify the function $\gamma\left(t\right)$ with its image, so that  $\gamma\left(\alpha\right)=\xi_{0}$ and $\gamma\left(\beta\right)=\xi_{1}$. However, we should
regard this set of points as having an orientation: a path starts at one end-point and ends at the other. The function $\gamma\left(t\right)$ is called a \textit{parametrization} of the path $\gamma$.
\textit{Withint the theory in question we call the function} $\gamma\left(t\right)$ \textit{a C-representation of a path} $\gamma.$

Replacing an independent complex variable $\xi$ by a quaternionic variable $p$ in (\ref{Eq6}) we define a H-representation of a path (i. e. a quaternionic path) as follows:
\begin{Definition}
A H-representation of a path from a quaternionic point $p_{0}$ to a quaternionic point $p_{1}$ is a function $\gamma\left(t\right)$ defined by
\begin{equation} \label{Eq7}
\gamma\left(t\right) :\left[\alpha,\beta\right]\rightarrow\mathbb{H},
\end{equation}
where $\left[\alpha,\beta\right]$ is a real interval. We say that the path $\gamma\left(t\right)$ starts at $\gamma\left(\alpha\right)$ and ends at $\gamma\left(\beta\right).$ We regard a path as a set of points in $\mathbb{H}$, i.e. we identify the function $\gamma\left(t\right)$ with its image, so that  $\gamma\left(\alpha\right)=p_{0}$ and $\gamma\left(\beta\right)=p_{1}$. We regard this set of points as having an orientation: a path starts at one end-point and ends at the other. We call the function $\gamma\left(t\right)$ a \textit{parametrization} of the path $\gamma$.
\end{Definition} 
\begin{Definition}
A path $\gamma :\left[\alpha,\beta\right]\rightarrow\mathbb{H}$ is called smooth if a derivative $\gamma^{'}$ exists and is continuous
throughout all of $\left[\alpha,\beta\right].$
\end{Definition} 
\begin{Definition}
Let $\gamma :\left[\alpha,\beta\right]\rightarrow\mathbb{H}$ be a path. If $\gamma \left(\alpha\right)=\gamma \left(\beta\right)$ (i.e. if $\gamma$ starts and ends at the same point) then we say that $\gamma$ is a closed path or a closed loop.
\end{Definition}
\begin{Definition}
A contour $\gamma$ is a collection of smooth paths $\gamma_{1},\gamma_{2},\dots ,\gamma_{n}$,  where the end-point of $\gamma_{r}$ coincides with the start point of $\gamma_{r+1}, 1\leq r\leq n-1.$ We write $$\gamma=\gamma_{1}+\gamma_{2}+\cdots +\gamma_{n}.$$ If the end-point of $\gamma_{n}$ coincides with the start point of $\gamma_{1}$ then we call $\gamma$ a closed contour.
\end{Definition}
\begin{Definition}
Let  $\gamma :\left[\alpha,\beta\right]\rightarrow\mathbb{H}$ be a path. Define  $-\, \gamma :\left[\alpha,\beta\right]\rightarrow\mathbb{H}$ to be the path $$-\,\gamma\left( t \right)=\gamma\left( \alpha +\beta -t \right).$$ We call $-\gamma$ the reversed path of $\gamma$.
\end{Definition}
Since expressions (\ref{Eq6}) and (\ref{Eq7}) (i.e. the expressions for complex and quaternionic parametrization) are similar, the all other definitions of paths in C- and H-representation are identical with the difference that in a C-representation we consider the paths $\gamma=\xi\left(t\right), t\in \left[\alpha,\beta\right]$ and in H-representation the paths $\gamma=p\left(t\right), t\in \left[\alpha,\beta\right].$ 
For simplicity we further consider smooth paths and open simply connected domains.
\begin{Definition} \label{def2.3}
Let $f:D\rightarrow \mathbb{H}$ be a continuous function on an open simply connected domain  $D\in\mathbb{H}$ ($D$ can be considered as $G_{4}$ in Definition \ref{de1.1} or Theorem \ref{th1.2}). We say that a function $F:D\rightarrow \mathbb{H}$ is an antiderivative of $f$ on $D$ if
\begin{equation} \label{Eq8}
F^{'}=f,
\end{equation} 
where the derivative of $F$ is defined by (\ref{Eq4}).
\end{Definition}
\begin{Definition}
We shall say that C- and H-representations have the same functional form if a functional dependence on a complex variable $\xi$ in a C-representation is the same as  a functional dependence on a quaternionic variable $p$ in a H-representation. We can obtain a H-representation from a C-representation for any expression by replacing a complex variable $\xi$ as a whole  by a quaternionic variable $p$, and the complex imaginary unit $i$ (if it is present) by the quaternionic imaginary unit $I$.
\end{Definition}
\begin{Proposition} \label{pr2.7}
(i)  The antiderivatives (if they exist) of one holomorphic function  in C- and H-representations have the same functional form. (ii) If an antiderivative in the C-representation exists and is C-holomorphic, then it exists and is H-holomorphic in the H-representation.
\end{Proposition}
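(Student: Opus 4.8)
The plan is to reduce both claims to the transition rule of Proposition \ref{pr1.5}, the construction of Theorem \ref{th1.2}, and the differentiation correspondence of Corollary \ref{co1.4}. First I would fix notation: let the holomorphic function under consideration have C-representation $f_C\!\left(\xi\right)$ and H-representation $f_H\!\left(p\right)$, the latter obtained from the former by replacing $\xi$ with $p$ (and $i$ with $I$ wherever it occurs), so that by Proposition \ref{pr1.5} the two have the same functional form.

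For part (ii), I would start from a C-holomorphic antiderivative $F_C$ of $f_C$, so that $F_C^{'}\!\left(\xi\right)=f_C\!\left(\xi\right)$ in the sense of complex differentiation. Since $F_C$ is itself C-holomorphic, Theorem \ref{th1.2} allows me to construct, by the same replacement $\xi\rightarrow p$, a function $F_H\!\left(p\right)$ that is H-holomorphic of the same kind as $F_C$. This already supplies the existence and H-holomorphy asserted in (ii); what remains is to verify that $F_H$ is in fact an antiderivative of $f_H$ in the sense of Definition \ref{def2.3}.

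To carry this out, which simultaneously yields part (i), I would invoke Corollary \ref{co1.4}: because $F_H$ is H-holomorphic and shares its functional form with the C-holomorphic $F_C$, its full quaternionic derivative $F_H^{'}$ (defined via (\ref{Eq4})) has exactly the same functional form as the complex derivative $F_C^{'}$. Since $F_C^{'}=f_C$ and the H-representation of $f_C$ is $f_H$, this gives $F_H^{'}=f_H$, so that $F_H$ is an antiderivative of $f_H$ by Definition \ref{def2.3}. Hence $F_H$ and $F_C$ have the same functional form, proving (i); and since, as in the complex case, any two antiderivatives differ only by an additive constant, this correspondence of functional form transfers from the constructed antiderivative to every antiderivative.

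The step I expect to require the most care is the identity $F_H^{'}=f_H$, since it hinges on applying Corollary \ref{co1.4} in the correct direction: the Corollary describes derivatives of a function \emph{already known} to be H-holomorphic, so one must first secure the H-holomorphy of $F_H$ via Theorem \ref{th1.2} before invoking the form-preservation of differentiation. Ordering the argument so that existence and holomorphy are established first and the antiderivative identity second is precisely what makes the two parts lock together without circularity.
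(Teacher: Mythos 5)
Your proposal is correct at the level of rigor the paper operates at, and it uses exactly the same ingredients (Theorem \ref{th1.2}, Corollary \ref{co1.4}, Definition \ref{def2.3}), but it organizes them differently. The paper proves (i) first and by contradiction: it assumes the antiderivative of $f\left(p\right)$ has a functional form different from that of the antiderivative of $f\left(\xi\right)$, differentiates both via (\ref{Eq8}) and Corollary \ref{co1.4}, and concludes that $f\left(p\right)$ and $f\left(\xi\right)$ would then have different forms, a contradiction; part (ii) is then deduced from (i) plus Theorem \ref{th1.2}. You instead argue directly and in the reverse order: you first build $F_H$ from $F_C$ by the substitution $\xi\rightarrow p$, secure its H-holomorphy via Theorem \ref{th1.2}, verify $F_H^{'}=f_H$ via Corollary \ref{co1.4}, and only then read off (i). Your direct construction is arguably cleaner --- it exhibits the H-antiderivative explicitly rather than ruling out its non-existence --- and your remark about the order of quantifiers (holomorphy of $F_H$ must be established before Corollary \ref{co1.4} can be invoked) is a point the paper glosses over. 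The one step you add that the paper does not supply is the claim that any two quaternionic antiderivatives differ by an additive constant, which you need to pass from your constructed $F_H$ to \emph{every} antiderivative; this is standard in the complex case but is nowhere established for H-holomorphic functions in this paper, so strictly speaking it is an unproved auxiliary fact (the paper's contradiction argument avoids it, though at the cost of the equally loose assertion that functions of different functional form cannot share a derivative). Neither version is fully rigorous, but yours is no weaker than the paper's.
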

\begin{proof}
 Let we have two holomorphic functions: $f\left(\xi\right)$ in C-representation  and $f\left(p\right)$ in H-representation.  Let they have the same functional form. Let us assume, to the contrary, that $f\left(p\right)$ have an antiderivative of the functional form different from an antiderivative of $f\left(\xi\right)$. Then, according (\ref{Eq8}) and Corollary \ref{co1.4}, we get the function $f\left(p\right)$ of the form different from  $f\left(\xi\right)$  (two functions with different dependences on variables cannot have the same derivatives), which is contrary to the initial statement that $f\left(\xi\right)$ has the same functional form as $f\left(p\right)$. This proves the statement (i).

(ii) Since the antiderivatives  of any function $f$ (if they exist) in C- and H-representations   have the same functional form, we can speak that a H-representation of an antiderivative of $f$ can be obtained from a C-representation of an antiderivative of $f$ by replacing a complex variable $\xi$ by a quaternionic $p$ in an expression for a C-representation, i.e. if a C-representation exists, then a H-representation also exists.  At that, according to Theorem \ref{th1.2}, if the antiderivative $F$ in a C-representation is C-holomorphic, then the antiderivative $F$ in a H-representation is H-holomorphic. This completes the proof of the statement (ii).
\end{proof}
Thus, a question of finding quaternionic antiderivatives of H-holomorphic functions can be boiled down to  a question of finding complex antiderivatives of C-holomorphic functions.  It follows (we can suppose) that the quaternionic antiderivative should not be sought if  there is no the complex analogue of the antiderivative.
\subsection{Quaternionic contour integration}
Similarly to complex analysis  \cite[,\!\!\!p.~\!\!122]{st:ta}, using the path parametrization (\ref{Eq7}), we introduce the following definition of a contour integral in the H-representation:
\begin{Definition} \label{def2.7}
Let $f:D\rightarrow \mathbb{H}$ be a continuous quaternionic function on an open simply  connected domain  $D\in\mathbb{H}$ (can be considered as $G_{4}$ in Definition \ref{de1.1} or Theorem \ref{th1.2}). Let $\gamma :\left[\alpha,\beta\right]\rightarrow D$ be a
smooth path in $D$. Then the integral of $f\left(p\right)$ along $\gamma$ is defined to be
\begin{equation} \label{Eq9}
\int_{\gamma}f\left(p\right)d p=\int_{\alpha}^{\beta}f\left(\gamma\left(t\right)\right)\gamma^{'}\left(t\right)dt. 
\end{equation}
\end{Definition}

Now we adapt the  Fundamental Theorem of Complex Contour Integration \cite[\!\!\!p.~\!\!134]{st:ta}  to the quaternion case.
\begin{theorem}\label{th2.10}(The quaternionic generalization of the fundamental theorem of contour integration) 
Suppose that $f:D\rightarrow \mathbb{H}$ is continuous on $D$, $ F:D\rightarrow \mathbb{H}$ is an antiderivative of $f\left(p\right)$ on $D$, and $\gamma$ is a contour from $p_{0}$ to $p_{1}.$ Then
\begin{equation} \label{Eq10}
\int_{\gamma}f\left(p\right) dp=F\left(p_{1}\right)-F\left(p_{0}\right).
\end{equation}
\end{theorem}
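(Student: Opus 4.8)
The plan is to mirror the classical complex proof, reducing the statement to the ordinary real Fundamental Theorem of Calculus applied componentwise, with the non-commutativity of $\mathbb{H}$ controlled by the commutativity-like behaviour of H-holomorphic functions. First I would handle a single smooth path $\gamma:[\alpha,\beta]\to D$ and only afterwards assemble the general contour by telescoping. Starting from Definition \ref{def2.7}, I write
$$\int_{\gamma}f(p)\,dp=\int_{\alpha}^{\beta}f(\gamma(t))\,\gamma'(t)\,dt,$$
so the entire task is to recognise the integrand $f(\gamma(t))\gamma'(t)$ as a total derivative in the real parameter $t$.

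The crux is to establish the quaternionic chain rule
$$\frac{d}{dt}F(\gamma(t))=F'(\gamma(t))\,\gamma'(t),$$
where $F'$ is the full derivative of Theorem \ref{th1.3} and $F'=f$ by hypothesis. Writing $\gamma(t)=x(t)+y(t)i+z(t)j+u(t)k$ with $x,y,z,u$ real, the multivariable chain rule first gives
$$\frac{d}{dt}F(\gamma(t))=\partial_x F\,x'(t)+\partial_y F\,y'(t)+\partial_z F\,z'(t)+\partial_u F\,u'(t).$$
I would then use the H-holomorphy conditions (\ref{Eq3}) together with the full-derivative formula (\ref{Eq5}) to collapse this four-term sum into the single product $F'(\gamma(t))\gamma'(t)$, exactly as the complex Cauchy--Riemann equations collapse $\partial_x F\,x'+\partial_y F\,y'$ into $F'(\xi)\gamma'(t)$ in the classical case.

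This is precisely where non-commutativity threatens: the factors $F'(\gamma(t))$ and $\gamma'(t)$ must combine in the correct order for the identity to hold, and a priori a product of quaternions is order-sensitive. The resolution — and the point I expect to be the main obstacle — is that for H-holomorphic $F$ the left and right derivatives coincide and the quaternionic product of H-holomorphic quantities behaves commutatively, as recalled in the basis for Proposition \ref{pr1.5}. Hence the ordering of $F'(\gamma(t))$ and $\gamma'(t)$ is immaterial and the chain rule is well-defined. With the chain rule in hand, the integrand is a genuine $t$-derivative; splitting $F(\gamma(t))$ into its four real components and applying the ordinary real Fundamental Theorem of Calculus to each yields
$$\int_{\alpha}^{\beta}\frac{d}{dt}F(\gamma(t))\,dt=F(\gamma(\beta))-F(\gamma(\alpha))=F(p_1)-F(p_0).$$

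Finally, for a contour $\gamma=\gamma_1+\cdots+\gamma_n$ I would apply the single-path result to each smooth piece $\gamma_r$ and add. Since the end-point of $\gamma_r$ coincides with the start-point of $\gamma_{r+1}$, the intermediate boundary values cancel in pairs, and only the initial and terminal contributions survive, leaving $F(p_1)-F(p_0)$. Thus the whole argument rests on verifying the chain rule in the non-commutative setting, and the key enabling fact is that H-holomorphy forces the product $F'(\gamma(t))\gamma'(t)$ to be independent of order.
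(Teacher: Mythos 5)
Your proposal follows essentially the same route as the paper's proof: reduce to a single smooth path, establish the quaternionic chain rule $\frac{d}{dt}F(\gamma(t))=F'(\gamma(t))\gamma'(t)=f(\gamma(t))\gamma'(t)$, integrate componentwise via the ordinary Fundamental Theorem of Calculus (the paper uses the two complex Cayley--Dickson constituents where you use four real components, an immaterial difference), and telescope over the pieces of the contour. If anything you are more explicit than the paper, which simply invokes ``the chain rule'' without addressing the collapse of the four-term sum or the ordering of the factors $F'(\gamma(t))$ and $\gamma'(t)$ that you correctly identify as the one point needing justification in the non-commutative setting.
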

\begin{proof}
It is sufficient here to prove the theorem for smooth paths. Suppose that the path $\gamma:\left[\alpha,\beta\right]\rightarrow D,\,\gamma \left(\alpha\right)=p_{0},\gamma \left(\beta\right)=p_{1}$, is a smooth path. Let $\omega\left(t\right)=f\left(\gamma \left(t\right)\right)\gamma^{'}\left(t\right),$ where $\gamma^{'}\left(t\right)$ is a derivative of $\gamma \left(t\right)$ and let $W\left(t\right)=F\left(\gamma \left(t\right)\right).$ Then by the chain rule $$W^{'}\left(t\right)=F^{'}\left(\gamma \left(t\right)\right)\gamma^{'}\left(t\right)=f\left(\gamma \left(t\right)\right)\gamma^{'}\left(t\right)=\omega\left(t\right).$$ Write this expression in the the Cayley–Dickson doubling form (\ref{Eq1})  as follows: $$\omega\left(t\right)=\Phi_{1}\left(t\right)+\Phi_{2}\left(t\right)\cdot j\,\,\, \text{and}\,\,\ W\left(t\right)=U\left(t\right)+V\left(t\right)\cdot j$$ so that $U^{'}=\Phi_{1},\,\,\ V^{'}=\Phi_{2}.$ Hence, following (\ref{Eq9}), we get $$\int_{\gamma}f\left(p\right)dp=\int_{\alpha}^{\beta}f\left(\gamma\left(t\right)\right)\gamma^{'}\left(t\right)dt=\int_{\alpha}^{\beta}\omega\left(t\right) dt$$ $$=\int_{\alpha}^{\beta}\Phi_{1}\left(t\right) dt \,\,+\int_{\alpha}^{\beta}\Phi_{2}\left(t\right) dt\, \cdot j = U\left(t\right)\Bigr|_{\alpha}^{\beta}+V\left(t\right)\Bigr|_{\alpha}^{\beta}\cdot j \\=W\left(t\right)\Bigr|_{\alpha}^{\beta}=F\left(p_{1}\right)-F\left(p_{0}\right).$$ We have used the Fundamental Theorem of Calculus \cite[,\!\!\!p.~342]{st:ta} for the complex functions $U$ and $V$.
\end{proof} 
We see that integral (\ref{Eq10}) does not depend on a choice of a path $\gamma$ from $p_{0}$ to $p_{1}$; all we need to know is that there exists an antiderivative for $f\left(p\right)$ on a domain $D$ that contains $p_{0}, p_{1}.$ \\

\begin{Proposition} \label{pr2.11}
Expressions, including integrals of holomorphic functions and dependencies on variables as a whole, in a H-representation have the same form as those in a C-representation with a difference that in a C-representation they have a dependence on a variable $\xi$ and the complex imaginary unit $i$ (if it is encountered) and  in a H-representation the same dependence on $p$ and the quaternionic imaginary unir $I$.
\end{Proposition}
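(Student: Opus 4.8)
The plan is to reduce the assertion about integrals to the already-established assertion about antiderivatives (Proposition \ref{pr2.7}), using the quaternionic fundamental theorem of contour integration (Theorem \ref{th2.10}). The idea is that once antiderivatives are known to have the same functional form in the two representations, the same follows automatically for the integrals they compute, and the only new content is to certify that the integral step itself respects the transition $\xi\mapsto p$, $i\mapsto I$.

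First I would treat the core case: a contour integral $\int_{\gamma}f\left(p\right)dp$ of a holomorphic function $f$ along a contour $\gamma$ from $p_{0}$ to $p_{1}$. By Theorem \ref{th2.10} this equals $F\left(p_{1}\right)-F\left(p_{0}\right)$, where $F$ is an antiderivative of $f$ on $D$. By Proposition \ref{pr2.7}(i) the antiderivative $F$ has the same functional form in the C- and H-representations, i.e. the expression $F\left(p\right)$ is obtained from $F\left(\xi\right)$ by the replacement $\xi\mapsto p$, and $i\mapsto I$ wherever the complex unit occurs. Consequently $F\left(p_{1}\right)-F\left(p_{0}\right)$ is obtained from $F\left(\xi_{1}\right)-F\left(\xi_{0}\right)$ by exactly the same replacement, so the integral has the same functional form in both representations. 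Moreover, by Proposition \ref{pr2.7}(ii), since $F$ is C-holomorphic it is H-holomorphic in the H-representation, so the quaternionic value produced is genuinely the H-representation of the complex result.

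Next I would extend this to general expressions built from holomorphic functions by the admissible operations: pointwise evaluation, the algebraic operations, differentiation, and integration. I would argue by induction on how the expression is assembled. The base case is a holomorphic function itself, which transitions by $\xi\mapsto p$, $i\mapsto I$ by Proposition \ref{pr1.5}. The inductive steps are: differentiation preserves the functional form by Corollary \ref{co1.4}; integration preserves it by the core case just established; and the algebraic operations preserve it because the quaternionic multiplication of H-holomorphic functions behaves as commutative (as recalled before Proposition \ref{pr1.5}), so that sums, products and quotients of same-form H-representations are again the same-form H-representations of the corresponding C-expressions. Combining these, every admissible expression inherits the same functional form in both representations, with the sole replacements $\xi\mapsto p$ and $i\mapsto I$.

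The main obstacle I anticipate is not a single computation but the informality of the word \emph{expression}: the statement does not delimit the class of allowed constructions, so the real work is to make the induction honest by restricting to expressions assembled solely from holomorphic functions via operations already known to commute with the transition. The one genuinely structural point is the integral step, where the quaternionic integral is a priori defined through a parametrization (Definition \ref{def2.7}) rather than through an antiderivative; here the reduction rests entirely on Theorem \ref{th2.10}, whose endpoint formula and implicit path-independence let the functional form of $F$ carry over to the integral. Care is also needed with the complex unit: when a complex integral evaluates to a constant containing $i$, the proposition asserts that the corresponding quaternionic value contains $I$, which is precisely the $i\mapsto I$ clause inherited from Proposition \ref{pr1.5}.
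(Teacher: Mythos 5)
Your route is genuinely different from the paper's: the paper's proof of Proposition \ref{pr2.11} does not pass through antiderivatives at all. It simply observes that the two \emph{defining} formulas of quaternionic integration --- the parametrized definition (\ref{Eq9}) and the endpoint formula (\ref{Eq10}) --- are character-for-character the complex formulas with $\xi$ replaced by $p$ and $i$ by $I$, and concludes that anything built from them transfers verbatim. You instead reduce an integral to $F\left(p_{1}\right)-F\left(p_{0}\right)$ via Theorem \ref{th2.10} and then invoke Proposition \ref{pr2.7} on the functional form of $F$, wrapped in a structural induction over expressions. The induction is a reasonable way of making the vague word ``expression'' honest, and is more explicit than what the paper offers.

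However, there is a genuine gap in your core case. The reduction through Theorem \ref{th2.10} is only available when $f$ possesses an antiderivative on a domain containing the contour. The integrals this proposition is actually needed for later --- integrals of the form $\int_{C} f\left(p\right)\left(p-p_{0}\right)^{-n-1}dp$ in Cauchy's Integral Formula, Taylor's and Laurent's theorems, and the Residue Theorem --- are nonzero over closed contours, so their integrands have no antiderivative on the punctured domains in question and Theorem \ref{th2.10} says nothing about them. Your own closing remark concedes that ``the reduction rests entirely on Theorem \ref{th2.10},'' which is precisely the problem. For those integrals the argument has to go through the parametrized definition (\ref{Eq9}) directly, i.e.\ the observation that $\int_{\gamma}f\left(p\right)dp=\int_{\alpha}^{\beta}f\left(\gamma\left(t\right)\right)\gamma^{'}\left(t\right)dt$ is formally identical to its complex counterpart under $\xi\mapsto p$, $i\mapsto I$; that is the step the paper's proof cites and your proposal leaves uncovered.
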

\begin{proof}
 This follows from the same initial determining formulas (\ref{Eq9}) and (\ref{Eq10}) in H-representation  and corresponding formulas in C-representation  \cite[,\!\!\!pp.~\!\!122,134]{st:ta}  differing from each other only by independent variables $\xi$ and  $p$ as well as the complex imaginary unit $i$ and the quaternionic imaginary unir $I$, since upon the transition from the C- to H-representation we also must \cite[\!\!p.\!\!~6]{pm:onno} replace the complex imaginary unit $i$ by  the quaternionic imaginary unir $I$.  In other words, the transition from the C- to the H-representation is carried out only by replacing the variables and imaginary units; anything more does not change.
\end{proof}

This statement we regard as sufficiently reasoned within the theory of H-holomorphic functions  because it follows from the full similarity of C- and H-representations. 
Cauchy’s Theorem states conditions under which $\oint_\gamma f=0$ when there is no initial reason for $f\left(p\right)$ to have an antiderivative \cite[,\!\!\!p.~\!\!169]{st:ta}. There is the fundamental proof of Cauchy’s Theorem for quaternionic functions  \cite[p.~20]{sud:qua}, following from Stoke's theorem (see also \cite[\!p.\!\!~5]{sh:ica}). It is complicated and we confine ourselves to a single simple proof sufficient for the aims of this article, using the advantage of the existence of C- and H-representations.
\begin{theorem} \label{th3.1} (Cauchy’s Theorem) 
Let $\psi_{H}\left(p\right)$ be H-holomorphic (i.e. without singularities) on a domain $D$ and let $\gamma$ be a closed contour in $D$. Then
 \begin{equation*}
\oint_\gamma \psi_{H}\left(p\right)dp =0.
\end{equation*}
\end{theorem}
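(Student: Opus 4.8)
The plan is to exploit the full parallelism between the C- and H-representations rather than to re-derive Cauchy's Theorem from Stokes' theorem as in \cite{sud:qua}. By Theorem~\ref{th1.2} the H-holomorphic function $\psi_{H}(p)$ is the H-representation of some C-holomorphic analogue $\psi_{C}(\xi)$ defined on the corresponding complex domain $G_{2}$, where $G_{2}$ follows from $D$ upon the transition $p\to\xi$. The closed contour $\gamma$ in $D$ likewise determines, through the same variable replacement, a closed contour $\Gamma$ in $G_{2}$. Since $\psi_{H}$ is assumed free of singularities on $D$, its analogue $\psi_{C}$ is C-holomorphic with no singularities on $G_{2}$, so the classical complex Cauchy Theorem \cite[p.~169]{st:ta} applies and yields $\oint_{\Gamma}\psi_{C}(\xi)\,d\xi=0$.

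First I would invoke Proposition~\ref{pr2.11}: the quaternionic contour integral $\oint_{\gamma}\psi_{H}(p)\,dp$ has the same functional form as the complex contour integral $\oint_{\Gamma}\psi_{C}(\xi)\,d\xi$, the one being obtained from the other only by replacing $\xi$ as a whole by $p$ and the imaginary unit $i$ by $I$. Because the complex integral evaluates to the constant $0$, and $0$ carries no dependence on $\xi$ or $i$ and is therefore invariant under this replacement, the transferred expression is again $0$. Hence $\oint_{\gamma}\psi_{H}(p)\,dp=0$, as required.

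An equivalent route, and perhaps the cleaner one to record, bypasses the complex integral entirely. On the simply connected domain $G_{2}$ the C-holomorphic function $\psi_{C}$ possesses a C-holomorphic antiderivative $F_{C}$; by Proposition~\ref{pr2.7}(ii) the H-representation $\psi_{H}$ then possesses an H-holomorphic antiderivative $F$ on $D$. Theorem~\ref{th2.10} now gives, for the closed contour $\gamma$ from $p_{0}$ to $p_{1}=p_{0}$, $$\oint_{\gamma}\psi_{H}(p)\,dp=F(p_{1})-F(p_{0})=0.$$

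The point requiring the most care is the transfer itself. I must check that the variable replacement genuinely sends a closed contour lying in a simply connected domain of $\mathbb{H}$ to a closed contour in $\mathbb{C}$ (the condition $\gamma(\alpha)=\gamma(\beta)$ transfers directly, since the endpoints are identified in both representations), and that ``no singularities in $D$'' really corresponds to ``no singularities in $G_{2}$'' under the relation $G_{4}\supset G_{2}$ of Theorem~\ref{th1.2}. Granting the soundness of Proposition~\ref{pr2.11}, everything else is formal; the only genuinely non-trivial input is the classical complex Cauchy Theorem (or, in the antiderivative route, the classical existence of antiderivatives on simply connected complex domains), which the correspondence then lifts verbatim to $\mathbb{H}$.
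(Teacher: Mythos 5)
Your proposal is correct and follows essentially the same route as the paper: the paper's proof is the contrapositive form of your first argument (transferring $\oint_\gamma\psi_{C}(\xi)\,d\xi=0$ to the H-representation via the C-/H-correspondence of Proposition~\ref{pr2.11}), and your antiderivative route via Proposition~\ref{pr2.7} and Theorem~\ref{th2.10} at $p_{0}=p_{1}$ is exactly what the paper records in the remark immediately following the theorem. Your added attention to how closed contours and the absence of singularities transfer between $D$ and $G_{2}$ is a reasonable refinement but does not change the substance.
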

\begin{proof}
Let Cauchy's Theorem in C-representation be fulfilled, i.e.  $\oint_\gamma \psi_{C}\left(\xi\right)d\xi =0.$ Suppose that this theorem in H-representation is not fulfilled, i.e. $\oint_\gamma\psi_{H}\left(p\right)dp\neq 0.$ Then, the transition from   H-representation to  C-representation by replacing a variable $p$ in it by a variable $\xi$ gives $\oint_\gamma \psi_{C}\left(\xi\right)d\xi \neq 0,$  which contradicts the initial statement that  $\oint_\gamma \psi_{C}\left(\xi\right)d\xi =0$.
\end{proof}
\hspace*{-0.78cm} \textbf{Remark} Notice that Cauchy's Theorem in H-representation follows from Theorem \ref{th2.10} at $p_{0}=p_{1}.$
The Cauchy Theorem also follows  from Proposition \ref{pr2.11}\,.
\section{Cauchy’s Integral Formula and Taylor’s Theorem}
\subsection{The quaternionic estimation lemmas}
\begin{Lemma} \label{le3.1} 
Let $ f_{1},f_{2}:\left[ \alpha,\beta \right]\longrightarrow \mathbb{C}$ be continuous functions. Then
 \begin{equation}
\left| \int_{\alpha}^{\beta} \left( f_{1}\left( t \right)+f_{2}\left(t\right)\cdot j \right)\, dt\right|\le \int_{\alpha}^{\beta}\left| f_{1}\left( t \right)+f_{2}\left(t\right)\cdot j \right|dt  
\end{equation}
\end{Lemma}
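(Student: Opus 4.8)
The plan is to adapt the standard complex-analytic proof of the estimate $\left|\int f\right|\le\int|f|$ to the quaternionic setting, exploiting the fact that the quaternionic norm is multiplicative. Observe first that $f_{1}(t)+f_{2}(t)\cdot j$ is, for each $t$, simply a quaternion written in the Cayley--Dickson doubling form, so every algebraic property of $\mathbb{H}$ is available. The integral $Q:=\int_{\alpha}^{\beta}\left(f_{1}(t)+f_{2}(t)\cdot j\right)dt$ is defined componentwise, i.e. $Q=\int_{\alpha}^{\beta}f_{1}(t)\,dt+\left(\int_{\alpha}^{\beta}f_{2}(t)\,dt\right)\cdot j$, and is therefore itself a quaternion. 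The key idea, exactly as over $\mathbb{C}$, is to reduce the inequality to the elementary bound $\mathrm{Sc}(w)\le|w|$ for the scalar part of a quaternion, after first ``rotating'' $Q$ onto the positive real axis by a unit quaternion.

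First I would dispose of the trivial case $Q=0$, for which the inequality holds since the right-hand side is non-negative. Assuming $Q\neq 0$, I set $u:=Q/|Q|$, a unit quaternion, so that $Q=|Q|\,u$. Using the fundamental identity $\overline{u}\cdot u=|u|^{2}=1$, left-multiplication by $\overline{u}$ gives $\overline{u}\cdot Q=|Q|$, a non-negative real number. Since left-multiplication by the constant quaternion $\overline{u}$ is a real-linear operation that commutes with componentwise integration, I would pull $\overline{u}$ inside:
\[
|Q|=\overline{u}\cdot Q=\int_{\alpha}^{\beta}\overline{u}\cdot\left(f_{1}(t)+f_{2}(t)\cdot j\right)dt.
\]
Because the left-hand side is a real scalar, only the scalar part of the integrand contributes, so $|Q|=\int_{\alpha}^{\beta}\mathrm{Sc}\!\left(\overline{u}\cdot\left(f_{1}(t)+f_{2}(t)\cdot j\right)\right)dt$. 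Applying $\mathrm{Sc}(w)\le|w|$ pointwise together with the multiplicativity of the norm and $|\overline{u}|=|u|=1$ then yields
\[
|Q|\le\int_{\alpha}^{\beta}\left|\overline{u}\cdot\left(f_{1}(t)+f_{2}(t)\cdot j\right)\right|dt=\int_{\alpha}^{\beta}\left|f_{1}(t)+f_{2}(t)\cdot j\right|dt,
\]
which is the claimed inequality.

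The main obstacle is bookkeeping the non-commutativity of $\mathbb{H}$: one must consistently keep $\overline{u}$ on the \emph{same} side (here, the left) throughout, since $Qu\neq uQ$ in general, and verify that the step $\overline{u}\cdot Q=|Q|$ relies only on $\overline{u}\,u=|u|^{2}$ rather than on commutativity. I would also make explicit that the two auxiliary facts I invoke, namely the fundamental relation $\overline{q}\,q=|q|^{2}$ and the multiplicativity $|q_{1}q_{2}|=|q_{1}|\,|q_{2}|$, hold for quaternions presented in the Cayley--Dickson form $f_{1}+f_{2}\cdot j$, which is routine. Finally, I would note that reading the argument with right-multiplication instead gives the same bound, so the choice of side is immaterial to the conclusion.
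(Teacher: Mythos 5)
Your proposal is correct and follows essentially the same route as the paper: both multiply the integral by its (conjugated) value, pull that constant inside the integral, reduce to the pointwise bound on the scalar/real part, and finish with multiplicativity of the quaternionic norm. The only cosmetic difference is that you normalize by $|Q|$ up front and dispose of the case $Q=0$ explicitly, whereas the paper multiplies by the unnormalized conjugate and cancels a factor of $\sqrt{|X|^{2}+|Y|^{2}}$ at the end (implicitly assuming it is nonzero).
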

\begin{proof}
Write
\begin{equation*}
\int_{\alpha}^{\beta} \left( f_{1}\left( t \right)+f_{2}\left(t\right) \cdot j \right) dt =X+Y \cdot j,  
\end{equation*}
where $X,Y$ are some complex constants. \\
Then 
 \begin{align*}
&\left|X+Y\cdot j  \right|^{2}=\left| X \right|^{2}+\left| Y \right|^{2}=\overline{\left( X+Y\cdot j  \right)}\left( X+Y\cdot j  \right)=\left( \overline{X}-Y\cdot j  \right)\left( X+Y\cdot j  \right)\\
&=\left( \overline{X}-Y\cdot j  \right) \int_{\alpha}^{\beta} \left( f_{1}\left( t \right)+f_{2}\left(t\right) \cdot j \right) dt
=\int_{\alpha}^{\beta} \left( \overline{X}-Y\cdot j  \right) \left( f_{1}\left( t \right)+f_{2}\left(t\right) \cdot j \right) dt\\
&=\int_{\alpha}^{\beta} \left(\overline{X} f_{1}\left( t \right)+Y\overline{f_{2}\left(t\right)}  \right) dt +\int_{\alpha}^{\beta} \left(\overline{X} f_{2}\left( t \right)-Y\overline{f_{1}\left(t\right)}  \right) dt \cdot j.
\end{align*}

Since $\left|X+Y\cdot j  \right|^{2}\in \mathbb{R}$, the \textquotedblleft imaginary\textquotedblright \,second part of the above expression must be zero,
i.e. we have $\int_{\alpha}^{\beta} \left(\overline{X} f_{2}\left( t \right)-Y\overline{f_{1}\left(t\right)}  \right) dt \cdot j=0.$
 Then 
\begin{equation} \label{Eq12}
\left|X+Y\cdot j  \right|^{2}=\left| X \right|^{2}+\left| Y \right|^{2}=\int_{\alpha}^{\beta} \left(\overline{X} f_{1}\left( t \right)+Y\overline{f_{2}\left(t\right)}  \right) dt.
\end{equation} 

When considering quaternionic expressions in the Cayley–Dickson doubling form (\ref{Eq1})  we will call by analogy with complex analysis the part of the expression multiplied by the imaginary unit $j$ the \textquotedblleft imaginary \!\!\textquotedblright  \, part $Im$ of the expression. The remaining part of the expression without the unit $j$ can be called the \textquotedblleft real\!\! \textquotedblright \,part $Re$.
 Notice that the integrand in (\ref{Eq12}) is the \textquotedblleft real \!\!\textquotedblright  \, part of $ \left( \overline{X}-Y\cdot j  \right) \left( f_{1}\left( t \right)+f_{2}\left(t\right) \cdot j \right)$. Recall that for any complex number $\xi$ we have  $Re\left(  \xi \right)\le \left| \xi \right|.$ Then, given that $X,Y, f_{1}\left(t\right), f_{2}\left(t\right)$ are complex values, we obtain that
  \begin{align*}
 \overline{X} f_{1}\left( t \right)+Y\overline{f_{2}\left(t\right)}&\le \left| \left(  \overline{X}-Y \cdot j \right) \left( f_{1}\left( t \right) + f_{2}\left( t \right) \cdot j  \right)\right|\\
&=\left| \overline{X}-Y \cdot j \right|\left| f_{1}\left( t \right) + f_{2}\left( t \right) \cdot j \right|\\
&=\sqrt{\left| X \right|^{2}+\left| Y \right|^{2}} \left| f_{1}\left( t \right) + f_{2}\left( t \right) \cdot j \right|.
\end{align*}

Hence, using  (\ref{Eq12}), we have 
\begin{align*}
\left| X \right|^{2}+\left| Y \right|^{2}&=\int_{\alpha}^{\beta} \left(\overline{X} f_{1}\left( t \right)+Y\overline{f_{2}\left(t\right)}\right)dt\\
&\le\sqrt{\left| X \right|^{2}+\left| Y \right|^{2}} \int_{\alpha}^{\beta}\left| f_{1}\left( t \right) + f_{2}\left( t \right) \cdot j \right|dt 
\end{align*}
Cancelling the term $\sqrt{\left| X \right|^{2}+\left| Y \right|^{2}}$ gives $$\sqrt{\left| X \right|^{2}+\left| Y \right|^{2}}\le  \int_{\alpha}^{\beta}\left| f_{1}\left( t \right) + f_{2}\left( t \right) \cdot j \right|dt $$. Then we have $$\left| \int_{\alpha}^{\beta} \left( f_{1}\left( t \right)+f_{2}\left(t\right)\cdot j \right)\, dt\right|=\left| X+Y \cdot j \right|=\sqrt{\left| X \right|^{2}+\left| Y \right|^{2}}\le \int_{\alpha}^{\beta}\left| f_{1}\left( t \right) + f_{2}\left( t \right) \cdot j \right|dt $$ as claimed.
\end{proof}
\begin{Lemma} \label{le3.2}
(The estimation lemma) Let $f:D\longrightarrow  \mathbb{H}$ be continuous and let $\gamma$ be a contour in $D.$ Suppose that $\left| f\left( p \right) \right|\le M$ for all $p$ on $\gamma$. Then $$\left| \int_{\gamma}f\left(p\right) dp \right|\le M \, length\left( \gamma \right).$$
\end{Lemma}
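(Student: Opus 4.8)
The plan is to reduce the statement to a single smooth path and then combine the determining formula for the contour integral with the preceding Lemma \ref{le3.1} and the multiplicativity of the quaternionic modulus. Since a contour $\gamma$ is by definition a finite concatenation $\gamma_1+\cdots+\gamma_n$ of smooth paths, the triangle inequality for the quaternionic modulus gives $\left|\int_\gamma f\,dp\right|\le\sum_{r=1}^{n}\left|\int_{\gamma_r}f\,dp\right|$, while the arc length is additive, $\text{length}(\gamma)=\sum_{r=1}^{n}\text{length}(\gamma_r)$. Hence it suffices to establish $\left|\int_{\gamma_r}f\,dp\right|\le M\,\text{length}(\gamma_r)$ for one smooth path, and I would work with a single smooth $\gamma:[\alpha,\beta]\to D$.

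First I would invoke Definition \ref{def2.7} to write $\int_\gamma f(p)\,dp=\int_\alpha^\beta f(\gamma(t))\gamma'(t)\,dt$, and express the integrand in the Cayley--Dickson doubling form as $f(\gamma(t))\gamma'(t)=f_1(t)+f_2(t)\cdot j$, where $f_1,f_2:[\alpha,\beta]\to\mathbb{C}$ are continuous because $f$ is continuous and $\gamma$ is smooth. This is exactly the shape required by Lemma \ref{le3.1}, which then yields
$$\left|\int_\gamma f(p)\,dp\right|=\left|\int_\alpha^\beta\bigl(f_1(t)+f_2(t)\cdot j\bigr)\,dt\right|\le\int_\alpha^\beta\bigl|f_1(t)+f_2(t)\cdot j\bigr|\,dt=\int_\alpha^\beta\bigl|f(\gamma(t))\gamma'(t)\bigr|\,dt.$$

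Next I would use that the quaternionic modulus is multiplicative, $\bigl|f(\gamma(t))\gamma'(t)\bigr|=\bigl|f(\gamma(t))\bigr|\,\bigl|\gamma'(t)\bigr|$, together with the hypothesis $\bigl|f(\gamma(t))\bigr|\le M$ on $\gamma$, to bound the integrand by $M\bigl|\gamma'(t)\bigr|$. Recognising $\int_\alpha^\beta\bigl|\gamma'(t)\bigr|\,dt$ as $\text{length}(\gamma)$ (the standard arc-length formula, which transfers verbatim from the complex setting) then gives $\left|\int_\gamma f\,dp\right|\le M\,\text{length}(\gamma)$, as claimed.

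The routine parts---the parametrisation and the length formula---carry over unchanged from the complex case, so the only genuinely quaternionic inputs are Lemma \ref{le3.1} (which replaces the elementary complex fact $\left|\int g\right|\le\int|g|$) and the multiplicativity $|pq|=|p|\,|q|$ of the quaternion norm; both are already available. The one point to watch is that quaternionic multiplication is noncommutative, so in $f(\gamma(t))\gamma'(t)$ the factor $f$ must stay on the left and $\gamma'$ on the right exactly as in Definition \ref{def2.7}; since $|pq|=|p|\,|q|$ holds irrespective of the order of the factors, this noncommutativity does not affect the estimate, and I expect no real obstacle beyond correctly applying Lemma \ref{le3.1}.
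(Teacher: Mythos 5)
Your proposal is correct and follows essentially the same route as the paper: apply Lemma \ref{le3.1} to the parametrised integral $\int_\alpha^\beta f(\gamma(t))\gamma'(t)\,dt$, use the multiplicativity of the quaternionic modulus to split $\left|f(\gamma(t))\gamma'(t)\right|$, bound $\left|f\right|$ by $M$, and identify $\int_\alpha^\beta\left|\gamma'(t)\right|dt$ with the arc length. The paper's version is simply terser --- it leaves the reduction to a single smooth path and the Cayley--Dickson decomposition implicit, both of which you spell out.
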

\begin{proof}
Simply note that by Lemma \ref{le3.1} we have that
\begin{align*}
\left|\int_{\gamma}f\left(p\right)dp  \right|&=\left| \int_{\alpha}^{\beta}f\left(\gamma\left(t\right)\right)\gamma^{'}\left(t\right)dt  \right|  \\
&\le \int_{\alpha}^{\beta}\left| f\left(\gamma\left(t\right)\right)  \right|\left| \gamma^{'}\left(t\right) \right| dt \\
&\le M \int_{\alpha}^{\beta} \left| \gamma^{'}\left(t\right) \right| dt \\
&=M \, length\left( \gamma \right),   
\end{align*}
where the length of a contour $\gamma $ is denoted by $ length\left( \gamma \right)$.
\end{proof}
\subsection{H-representation of a number plane}
Recall that there are isomorphic forms of representating complex numbers and quaternions. The complex numbers are usually \cite{st:ta} represented as follows: 
\begin{equation}  \label{Eq13}
\xi=x+yi.
\end{equation} 
The quaternions $p=x+yi+zj+uk$ can be represented (see, for example, \cite[\!p.\!\!~281]{gs:nth}, also  \cite[\!p.\!\!~16]{pm:eac} ) in the following isomorphic (algebraically equivalent) to (\ref{Eq13}) form: 
\begin{equation}  \label{Eq14}
 p=x+vI,
\end{equation}
where $v=\sqrt{y^{2}+z^{2}+u^{2}}$ is a real value, $I=\frac{yi+zj+uk}{\sqrt{y^{2}+z^{2}+u^{2}}}$ is a purely imaginary unit quaternion, so its square is $-1$ and $\left| I \right|=1.$ Previously we used the designation $r$ for the quaternionic imaginary unit (see \cite[\!\!p.\!\!~5]{pm:onno},  \cite[\!\!p.\!\!~14]{pm:onde},  \cite[\!\!p.\!\!~16]{pm:eac},  \cite[\!\!p.\!\!~12]{pm:oncc}), however in this article the designation $I$ used in \cite[\!\!p.\!\!~280]{gs:nth} is most appropriate, since we want to reserve the $r$ for designations of radii of circles. We see that the complex imaginary unit $i$ is replaced (see also  \cite[\!\!p.\!\!~16]{pm:eac},  \cite[\!\!p.\!\!~14]{pm:onde}) by the quaternionic imaginary unit $I$  in addition to replacing the variable $\xi$ by $p$ (Theorem \ref{th1.2}) upon the transition from C-representation (\ref{Eq13}) to  H-representation (\ref{Eq14}). Such a replacement must be also made when transiting  in all complex expressions containing $i$.   

Based on the isomorphy of expressions (\ref{Eq13}) and (\ref{Eq14}) we can treat a set of points represented by  (\ref{Eq14}) as  \textquotedblleft \,a  complex plane with the imaginary unit $I$ \!\textquotedblright .
\,The   complex plane  with the imaginary unit $I$ is \textquotedblleft movable \textquotedblright upon changing coordinates  in the quaternionic space, however the main properties of the imaginary unit $I$: $I^{2}= -1$ and $\left| I \right|=1$  do not change.
 If we mentally will place each time  such a  movable  plane onto  \textquotedblleft \,a plane of a table \!\textquotedblright, then paths, contours, and moving along them will look like  those in an ordinary complex plane with the the imaginary unit $i$.  According to the concept of C- and H-representations we further will briefly call a plane with the imaginary unit  $I$ a H-plane and an ordinary complex plane  with the imaginary unit  $i$  a C-plane.   Correspondingly, we will also subdivide curves in H-plane into H-path, H-contour, H-circle, etc.  as well as in C-plane into C-path, C-contour, C-circle, etc.

In complex analysis we can use the following parametrization  \cite[\!\!p.\!\!~121]{st:ta} of a circle: $$S\left( \xi \right) =\xi_{0}+ re^{it}, 0\le t\le 2\pi,$$ where $\xi_{0}$ is centre and $r$ is radius of a circle.
 Using the specified rules of replacing we can introduce the following notion of a  simplest (primitive)  quaternionic generalization of a complex circle.
\begin{Definition}\label{asd}
We call a H-representation of a circle a H-circle and define it by
 \begin{equation} 
 S\left( p \right) =p_{0}+r e^{It}, 0\le t\le 2\pi,
\end{equation}
where $p_{0}$ is \textquotedblleft centre  \!\textquotedblright  of a H-circle and $r$ is its  \textquotedblleft radius\textquotedblright.  A H-circle starts at the point $t=0$ and ends at the point $t=2\pi$. The point travels along the H-circle in the anticlockwise direction (the positive orientation).
\end{Definition}

 We say that a closed quaternionic path $\gamma$ winds around a point $p_{0}$ if it winds around this point in the H-plane and we denote the winding number by  $\omega\left( \gamma, p_{0} \right)$ just as in complex plane \cite[\!p.\!\!~149]{st:ta}.  The different forms of closed paths are represented in Fig. \!1, including the closed path with the negative orientation in Fig 1a).

 \hspace{20mm}

\label{fig:numb1} \begin{tikzpicture}[closed  hobby]
 \node[fill,circle,label=below:$p_0$,inner sep=1.5pt] at (0,0){};
\node at (0,-1.6) {$\text{a)}$}; 
\draw[ marking1reversed] plot coordinates { (0:0.8) (90:1.2) (180:0.8) (270:1.1) }; 
\draw[->,ultra thin] (-1,-1.2)--(0.9,-1.2) node[right]{$\textit{ x}$};
\draw[->,ultra thin] (-1,-1.2)--(-1,1.3) node[above]{$\textit{vI}$}; 
\begin{scope}[xshift=3cm] 
 \node[fill,circle,label=below:$p_0$,inner sep=1.5pt] at (0,0){}; 
 \node at (0,-1.6) {$\text{b)}$};
 \draw[number of arrows=7] plot coordinates { (0:0.8) (90:1.2) (180:1) (270:1.1)
 (0:1.1) (90:0.9) (180:1.2) (270:0.7)};
 \draw[->,ultra thin] (-1.3,-1.2)--(0.9,-1.2) node[right]{$\textit{ x}$};
\draw[->,ultra thin] (-1.3,-1.2)--(-1.3,1.3) node[above]{$\textit{vI}$}; 
\end{scope}
\begin{scope}[xshift=6cm] 
 \node[fill,circle,label=below:$p_0$,inner sep=1.5pt] at (0,0){};
 \node at (0,-1.6) {$\text{c)}$}; 
 \draw[number of arrows=5] plot coordinates { (0:0.8) (90:1.2) (180:0.8) (270:1.1)
 (0:1) (45:1.3) (50:0.6)};
  \draw[->,ultra thin] (-0.9,-1.2)--(0.9,-1.2) node[right]{$\textit{ x}$};
\draw[->,ultra thin] (-0.9,-1.2)--(-0.9,1.3) node[above]{$\textit{vI}$}; 
\end{scope}
\begin{scope}[xshift=8.5cm] 
 \node[fill,circle,label=below:$p_0$,inner sep=1.5pt] at (0,0){};
 \node at (0,-1.6) {$\text{d)}$}; 
 \draw[number of arrows=4] plot coordinates { (-20:0.8) (45:1.2) (110:0.8) 
 (45:0.3)};
   \draw[->, thin] (-0.9,-1.2)--(0.9,-1.2) node[right]{$\textit{x}$}; 
\draw[->, thin] (-0.9,-1.2)--(-0.9,1.3) node[above]{$\textit{vI}$}; 
\end{scope}
\hspace*{0.8cm} \node at (4.2,-2.3) (example-align) [draw=none, align=left]{Fig.\! \!1. Examples of paths  with different winding numbers: \\ a) $\omega\left( \gamma, p_{0} \right)=-1$,   b) $\omega\left( \gamma, p_{0} \right)=2$,\,\,c) $\omega\left( \gamma, p_{0} \right)=1$,\,\,d) $\omega\left( \gamma, p_{0} \right)=0.$}; \label{fig:numb1}
\end{tikzpicture}

 \hspace{20mm}

\hspace*{-0.78cm} \textbf{Remark} Notice that  we can draw the axis $vI$ perpendicularly to the axis $x$ on H-planes. According to \cite[\!p.\!\!~12]{pm:oncc} we have the polar form of a quaternion $p$ as follows: $$p=x+yi+zj+uk=m\left(\cos \theta+I\sin \theta\right),$$ where
$$m=\mid p \mid=\sqrt{x^{2}+y^{2}+z^{2}+u^{2}},$$
and an angle $\theta$ is defined from the expression
\begin{equation*} \tan \theta=\frac{v}{x}=\frac{\sqrt{y^{2}+z^{2}+u^{2}}}{x}.
\end{equation*} 
 In this formula $v=\sqrt{y^{2}+z^{2}+u^{2}}$ is an opposite (to an angle $\theta$) leg   of a right angled triangle, and $x$ is  an adjacent leg. These  are perpendicular to each other by the definition of the tangent. In complex analysis (C-representtion) we have the formula $\tan \theta=\frac{y}{x}$, where $y$ is an opposite leg and $x$ is perpendicular to it (also to $iy$)  an adjacent leg.  By analogy with this we can formally regard (draw) the axis $vI$ perpendicularly to the axis $x$  on the H-plane.

 \hspace*{-0.78cm} \textbf{Remark} We fix a position of a point $p_{0}$  in figures by using a single H-plane, on wich the transparent H-planes with a moving point are imposed.
 \begin{theorem}\label{th3.4}(The generalised version of Cauchy’s Theorem). 
Suppose that $\gamma_{1},\gamma_{2},\dots , \gamma_{n}$ are closed contours in a domain $D$ on a H-plane such that 
$$w\left(\gamma_{1},p\right)+w\left(\gamma_{2},p\right)+\cdots +w\left(\gamma_{n},p\right)=0,\,\,\, for\,\, all \,p\notin D$$
and let $f$ be H-differentiable in $D$. Then $$\int_{\gamma_{1}}f\left(p\right)dp +\int_{\gamma_{2}}f\left(p\right)dp+\cdots +\int_{\gamma_{n}}f\left(p\right)dp=0$$
\end{theorem}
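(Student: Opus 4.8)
The plan is to reduce the statement to the classical homology version of Cauchy's Theorem in the complex plane and then carry it over to the H-plane by the same transfer device already used in the proof of Theorem~\ref{th3.1}, exploiting the isomorphism between a H-plane and an ordinary C-plane with imaginary unit $I$ established in Section~3.2. First I would recall the standard complex result: if $\gamma_{1},\dots,\gamma_{n}$ are closed contours in a planar domain $G_{2}\subseteq\mathbb{C}$ satisfying $\sum_{k=1}^{n} w(\gamma_{k},\xi)=0$ for every $\xi\notin G_{2}$, and $\psi_{C}$ is C-holomorphic on $G_{2}$, then $\sum_{k=1}^{n}\oint_{\gamma_{k}}\psi_{C}(\xi)\,d\xi=0$. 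This is available from the same complex-analysis reference used throughout the paper.

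Next I would invoke the correspondence of Proposition~\ref{pr2.11} together with the H-plane construction: a H-plane, with $p=x+vI$ playing the role of $\xi=x+yi$, is isomorphic to a C-plane, so that H-contours, H-integrals, and winding numbers on the H-plane correspond term by term to their C-counterparts under the transition $p\leftrightarrow\xi$ (with $I\leftrightarrow i$). In particular the winding-number hypothesis $\sum_{k=1}^{n} w(\gamma_{k},p)=0$ for all $p\notin D$ becomes, upon this transition, $\sum_{k=1}^{n} w(\gamma_{k},\xi)=0$ for all $\xi\notin G_{2}$, where $G_{2}$ is the C-representation of $D$; and, by Theorem~\ref{th1.2}, H-differentiability of $f$ on $D$ corresponds to C-holomorphy of the C-representation of $f$ on $G_{2}$.

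The argument then follows the contradiction scheme of Theorem~\ref{th3.1}. Assume the complex version holds, so that $\sum_{k=1}^{n}\oint_{\gamma_{k}}\psi_{C}(\xi)\,d\xi=0$. Suppose, to the contrary, that the H-sum does not vanish, i.e.\ $\sum_{k=1}^{n}\oint_{\gamma_{k}}f(p)\,dp\neq 0$. Performing the transition $p\to\xi$ (replacing $I$ by $i$) in this nonzero quaternionic sum, which by Proposition~\ref{pr2.11} has the same functional form as the corresponding complex sum, yields $\sum_{k=1}^{n}\oint_{\gamma_{k}}\psi_{C}(\xi)\,d\xi\neq 0$, contradicting the complex result. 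Hence the H-sum must equal zero, as claimed.

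I expect the main obstacle to be justifying that the winding-number condition and the clause ``$p\notin D$'' transfer faithfully under the H-to-C transition. One must check that the isomorphism $p=x+vI\mapsto\xi=x+yi$ carries each H-contour $\gamma_{k}$ to a genuine C-contour, carries the domain $D$ to a compatible planar domain $G_{2}$, and preserves each winding number $w(\gamma_{k},\cdot)$, so that the transferred hypotheses are \emph{exactly} those required by the classical theorem. Since the winding number was defined in Section~3.2 ``just as in the complex plane'' and the H-plane is built to be isomorphic to a C-plane, these correspondences hold by construction; nevertheless this is the point where care is needed, and I would make the matching of hypotheses explicit before invoking the complex theorem.
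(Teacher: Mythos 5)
Your proposal is correct within the framework of the paper and takes essentially the same route: the paper's own proof is a single sentence asserting that H-paths on the H-plane look like C-paths on the C-plane, so the complex argument carries over verbatim. Your version is in fact more explicit than the paper's --- in particular you isolate the one point that genuinely needs checking (that contours, the domain, and winding numbers transfer faithfully under $p \leftrightarrow \xi$), which the paper glosses over entirely.
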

\begin{proof}
Without going into too much detail note that since H-paths on the H-plane look like C-paths on the C-plane, the  choice of H-paths is identical to the choice in the complex case (see Figure 8.14 in \cite[\!\!p.\!\!~181]{st:ta}) and therefore the proof of this theorem is similar to complex one.
\end{proof}
\begin{theorem}\label{thgg}(Cauchy’s Integral Formula for a H-circle). 
Let $D \subset \mathbb{H}$ be a domain having a simple smooth boundary $C$, and the function $f\left(p\right)$ be  H-holomorphic in $D$. Then for any $p_{0}\in int D$ we have that
\begin{equation} \label{Eq16}
f\left( p_{0} \right)=\frac{1}{2\pi I}\int_{C} \frac{f\left( p \right)dp}{p-p_{0}}.
\end{equation}
\end{theorem}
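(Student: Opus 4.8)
The plan is to transcribe, step by step, the classical complex-analytic proof of Cauchy's Integral Formula, using the fact that on a fixed H-plane (the copy of $\mathbb{C}$ spanned by $1$ and $I$ passing through $p_0$) every quantity that occurs lives in the commutative subalgebra generated by $I$, so that the computation is formally identical to the complex one. First I would set $g(p)=f(p)/(p-p_0)$, which is H-holomorphic on $D\setminus\{p_0\}$, and replace the boundary $C$ by a small H-circle $C_r\colon p=p_0+re^{It}$, $0\le t\le 2\pi$, in the sense of Definition~\ref{asd}. To justify this I would apply Theorem~\ref{th3.4} to the family $\{C,\,-C_r\}$: for every $p\notin D\setminus\{p_0\}$ the winding numbers cancel (outside $D$ both are $0$; at $p_0$ they are $+1$ and $-1$), whence
\[
\int_{C}g(p)\,dp=\int_{C_r}g(p)\,dp .
\]

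Next I would split $f(p)=\bigl(f(p)-f(p_0)\bigr)+f(p_0)$ and write
\[
\int_{C_r}\frac{f(p)}{p-p_0}\,dp=\int_{C_r}\frac{f(p)-f(p_0)}{p-p_0}\,dp+f(p_0)\int_{C_r}\frac{dp}{p-p_0}.
\]
For the last integral I would use the parametrization to get $dp=rI e^{It}\,dt$, so that $(p-p_0)^{-1}\,dp=(re^{It})^{-1}rIe^{It}\,dt=I\,dt$, the cancellation being legitimate because all factors are powers of $I$ and therefore commute; hence $\int_{C_r}(p-p_0)^{-1}dp=2\pi I$ and the second term equals $2\pi\,f(p_0)\,I$.

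For the remaining integral I would invoke the estimation lemma (Lemma~\ref{le3.2}): on $C_r$ one has $|p-p_0|=r$ and, by continuity of $f$ at $p_0$, $|f(p)-f(p_0)|<\varepsilon$ for $r$ small, so the integrand is bounded by $\varepsilon/r$ while $\mathrm{length}(C_r)=2\pi r$, giving a bound $2\pi\varepsilon$ that tends to $0$ as $r\to 0$. Since the left-hand side $\int_C g\,dp=\int_{C_r}g\,dp$ does not depend on $r$, letting $r\to 0$ forces $\int_C g\,dp=2\pi\,f(p_0)\,I$. Multiplying on the left by $\tfrac{1}{2\pi I}=-\tfrac{I}{2\pi}$ and using that $f(p_0)$ commutes with $I$ then yields \eqref{Eq16}.

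The main obstacle is precisely this last commutativity point: one must argue that on the fixed H-plane the value $f(p_0)$ and the unit $I$ commute, so that the prefactor $\tfrac{1}{2\pi I}$ exactly undoes the factor $2\pi I$. This rests on the observation that an H-holomorphic function maps the plane spanned by $1$ and $I$ into itself, which reduces the whole argument to the commutative complex model and makes the ordering of factors harmless. As a consistency check, the identity also follows at once from Proposition~\ref{pr2.11}, by transferring the complex Cauchy Integral Formula and replacing $\xi,\xi_0,i$ by $p,p_0,I$; I would, however, prefer the direct computation above since it makes explicit use of Theorem~\ref{th3.4} and Lemma~\ref{le3.2}.
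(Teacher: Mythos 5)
Your proposal follows essentially the same route as the paper's proof: shrink $C$ to a small H-circle about $p_{0}$ via Cauchy's Theorem, split $f(p)=f(p_{0})+\bigl(f(p)-f(p_{0})\bigr)$, evaluate $\int_{C_{r}}(p-p_{0})^{-1}dp=2\pi I$ by the parametrization $p=p_{0}+re^{It}$, and kill the remainder with the estimation lemma (Lemma~\ref{le3.2}) as $r\to 0$. The only differences are cosmetic --- you invoke Theorem~\ref{th3.4} with winding numbers where the paper applies Cauchy's Theorem to the annular region directly, and you make explicit the commutativity of $f(p_{0})$ with $I$, a point the paper leaves implicit in its appeal to the equality of left and right quotients.
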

\begin{proof}
We adapt  Theorem 2.1 from  \cite[\!\!p.\!\!~12]{lv:lcc} to the quaternion case. Consider the closed contours $C$ and $C_{\varepsilon}$ in $D$. Provided $\varepsilon >0$ is sufficiently small, we have that H-disc $D_{\varepsilon}=\left\{ p \in\mathbb{H} \mid \mid p-p_{0}  \mid \le   \varepsilon \right\}$ with a boundary $C_{\varepsilon}$   lies inside $D$. (In fact, we deal with a quaternionic ball, however using the notion of C- and H-representations we can speak of a H-disc and omit actual trajectories). Applying Cauchy's Theorem to a domain with a boundary $D\setminus \!\left\{int \left( D_{\varepsilon} \right)\right\}$ and the function $p\to \frac{f\left( p \right)}{p-p_{0}}, p  \neq p_{0}$ we obtain that
\begin{equation} \label{Eq17}
\frac{1}{2\pi I}\int_{C} \frac{f\left( p \right)dp}{p-p_{0}}=\frac{1}{2\pi I}\int_{C_{\varepsilon}} \frac{f\left( p \right)dp}{p-p_{0}}.
\end{equation}

 We see that it is sufficient to prove the equality
 \begin{equation} \label{Eq18}
f\left( p_{0} \right)=\frac{1}{2\pi I}\int_{C_{\varepsilon}} \frac{f\left( p \right)dp}{p-p_{0}}.
\end{equation}
Since the left quotient of one H-holomorphic function (here $f\left( p \right)$) by another H-holomorphic function (here $p-p_{0}$) equals the right one \cite[\!\!p.\!\!~20]{pm:eac}, we can use any of them in (\ref{Eq18}), for example the left quotient. 

Let $c$ be $c=f\left( p_{0} \right)$ and $f\left( p \right)$ be $f\left( p \right)=c+g\left( p \right)$,
where $g\left( p \right)$ is H-holomorphic and $g\left( p_{0} \right)=0$. Then the right-hand side of (\ref{Eq17}) can be written as follows: 
\begin{equation} \label{Eq19}
\frac{1}{2\pi I}\int_{C_{\varepsilon}} \frac{c\,dp}{p-p_{0}}+\frac{1}{2\pi I}\int_{C_{\varepsilon}} \frac{g\left( p \right)\,dp}{p-p_{0}}.
\end{equation}
We will show that the first term in this formula is equal to $c$ and the second term is equal to zero. 

To calculate the first integral we parameterize the boundary $C_{\varepsilon}$ of a H-disc  by the formula 
\begin{equation} 
C_{\varepsilon}: p=p_{0} +\varepsilon e^{2\pi It},\,\, t\in\left[0,1\right].
\end{equation}
Obviously, the orientation of the closed contour $C_{\varepsilon}$ with such a parametrization is positive. Further, since the value of $I$    is the same ($\mid \!I \!\mid =1$) at any point $p$, the value of $I$ may formally be regarded as a constant when differentiating with respect to $t$ and integrating round $C_{\varepsilon}$. Then $dp=2\pi I \varepsilon e^{2\pi I t}dt$ and we have
\begin{equation} \label{Eq21}
\int_{C_{\varepsilon}} \frac{c\,dp}{p-p_{0}}= \int_{0}^{1}\frac{c\,2\pi I \varepsilon e^{2\pi It}}{\varepsilon e^{2\pi It}}dt=2\pi Ic=2\pi I f\left( p_{0} \right).
\end{equation}
Thus the first integral in (\ref{Eq19}) is equal to $c$ and the first statement is true. 

The second integral in (\ref{Eq19})  can be estimated by using Lemmas  \ref{le3.1} and \ref{le3.2} as follows:
\begin{equation*} 
\left| \int_{C_{\varepsilon}} \frac{g\left( p \right)\,dp}{p-p_{0}} \right|\leq \int_{C_{\varepsilon}} \frac{\left| g\left( p \right)\right| \,dp}{ \left|p-p_{0} \right|}\leq\frac{M\left(\varepsilon\right)}{\varepsilon}2\pi \varepsilon=2\pi M\left(\varepsilon\right),
\end{equation*}
where $M\left(\varepsilon\right)=\sup_{\mid p-p_{0} \mid=\varepsilon}\mid \!g\left(p\right)\!\mid$. By virtue of Cauchy's Theorem the integrals in the last formula are independent of $\varepsilon$ (they are equal to integrals over $C$, see (\ref{Eq17})). On the other hand, since $g\left( p_{0} \right)=0$ and the function $g$ is continuous, we have $\lim_{\varepsilon \rightarrow 0} M\left(\varepsilon\right)=0$.
Thus the second integral in (\ref{Eq19}) is equal to zero and  the second statement is also true. This completes the proof of the theorem.
\end{proof}

The Cauchy integral formula allows us to express a H-differentiable function as a power series (the Taylor series expansion). Now we generalize some results from \cite{st:ta}.
\begin{theorem} \label{th3.6} (Quaternionic generalization of Taylor’s Theorem) 
Suppose that $f$ is H-holomorphic in a domain $D$ on the H-plane. Then all higher derivatives of $f\left(p\right)$ exist in $D$, and in any H-disc $$\left\{p\in\mathbb{H}\mid \mid p-p_{0}\mid <R \right\}\subset D$$
$f\left(p\right)$ has a Taylor series expansion given by $$ f\left(p\right)=\sum_{n=0}^{\infty} \frac{f^{\left(n\right)}\left(p_{0}\right)}{n!}\left(p-p_{0}\right)^{n}.$$
If \, $0<r <R$ and $ C_{r}\left(t\right)\!\!: p=p_{0}+r e^{It},\,0\leq t \leq 2\pi$, then $$f^{\left(n\right)}\left(p_{0}\right)=\frac{n!}{2\pi I}\int_{C_{r}}\frac{f\left(p\right)}{\left(p-p_{0}\right)^{n+1}}dp.$$
\end{theorem}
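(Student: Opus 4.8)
The plan is to mirror the classical complex derivation, leaning on the two facts the paper has already secured: Cauchy's Integral Formula for a H-circle (Theorem~\ref{thgg}) and the observation, used in its proof, that along a fixed H-circle $C_r$ the imaginary unit $I$ may be treated as a constant, so that all the algebra takes place inside a single H-plane isomorphic to $\mathbb{C}$. Fix $p_0$ and an arbitrary point $p$ with $|p-p_0|<R$, and choose $r$ with $|p-p_0|<r<R$. First I would apply Theorem~\ref{thgg} with the H-circle $C_r$, which winds once around $p$, to write $f(p)=\frac{1}{2\pi I}\int_{C_r}\frac{f(w)}{w-p}\,dw$, where $w$ denotes the integration variable.

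Next I would expand the Cauchy kernel as a geometric series. Writing $w-p=(w-p_0)-(p-p_0)$ and factoring out $w-p_0$ gives $\frac{1}{w-p}=\sum_{n=0}^{\infty}\frac{(p-p_0)^n}{(w-p_0)^{n+1}}$, which converges since $|(p-p_0)/(w-p_0)|=|p-p_0|/r<1$ for every $w$ on $C_r$. Because $f$, $(p-p_0)$ and $(w-p_0)^{-1}$ are all values of H-holomorphic functions, their quaternionic products commute, so the ordering of the factors $(p-p_0)^n$ and $(w-p_0)^{-(n+1)}$ is immaterial; this is the single point at which I rely essentially on the commutativity of H-holomorphic multiplication, and it is precisely what lets the complex computation carry over verbatim.

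Substituting the series into the integral and interchanging summation and integration then yields $f(p)=\sum_{n=0}^{\infty}a_n(p-p_0)^n$ with $a_n=\frac{1}{2\pi I}\int_{C_r}\frac{f(w)}{(w-p_0)^{n+1}}\,dw$. The interchange is the step requiring justification, and I expect it to be the main obstacle: I would bound the tail of the series on $C_r$ uniformly, using that the geometric ratio $q=|p-p_0|/r<1$ controls it, and then invoke the estimation Lemma~\ref{le3.2} together with Lemma~\ref{le3.1} to show that the integral of the tail is at most a constant times $q^N/(1-q)$, which tends to zero as $N\to\infty$. Once the H-plane reduction and the estimation lemmas are in hand this is routine.

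Finally I would identify the coefficients. By Corollary~\ref{co1.4} and Theorem~\ref{th1.3} the differentiation rules for H-holomorphic functions coincide with the complex ones, so the convergent power series may be differentiated term by term; differentiating $n$ times and evaluating at $p=p_0$ gives $f^{(n)}(p_0)=n!\,a_n$, that is $a_n=f^{(n)}(p_0)/n!$. This simultaneously establishes the Taylor expansion, exhibits the claimed representation $f^{(n)}(p_0)=\frac{n!}{2\pi I}\int_{C_r}\frac{f(w)}{(w-p_0)^{n+1}}\,dw$, and shows that all higher derivatives exist throughout $D$, since a convergent power series is infinitely H-differentiable term by term. I note in passing that the whole statement could instead be obtained directly from Proposition~\ref{pr2.11}, as the transcription of the complex Taylor theorem under $\xi\to p$ and $i\to I$, but the argument sketched above is the faithful adaptation the paper favours.
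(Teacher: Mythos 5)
Your proposal is correct and follows essentially the same route as the paper: Cauchy's Integral Formula for a H-circle, geometric expansion of the kernel $\frac{1}{w-p}$ in powers of $(p-p_0)/(w-p_0)$, the estimation lemmas to kill the remainder, commutativity of H-holomorphic products to justify the ordering of factors, and term-by-term differentiation to identify $a_n=f^{(n)}(p_0)/n!$. The only presentational difference is that the paper works with the \emph{finite} geometric-sum identity plus an explicit remainder term $A_m$ (writing the point as $p_0+h$ and showing $|A_m|\le \frac{M|h|}{r-|h|}\left(\frac{|h|}{r}\right)^m\to 0$), whereas you expand into the full infinite series first and then justify interchanging sum and integral by bounding the tail --- the two are the same estimate.
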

\begin{proof}
Given the similarity of C- and H-representations,  we can just as in complex analysis write for any $\mu \in \mathbb{H}$ the following expression: $$1+\mu +\mu^{2}+\cdots +\mu^{m}=\frac{1-\mu^{m+1}}{1-\mu}.$$
Put $\mu = \frac{h}{\left(p-p_{0}\right)},$ where $h$ is a fixed quaternionic quantity. Then $$1+\frac{h}{\left(p-p_{0}\right)}+\cdots +\frac{h^{m}}{\left(p-p_{0}\right)^{m}}=\frac{1-\left(\frac{h}{p-p_{0}}\right)^{m+1}}{1-\frac{h}{p-p_{0}}}=\frac{1-\left(\frac{h}{p-p_{0}}\right)^{m+1}}{p-p_{0}-h}\cdot\left(p-p_{0}\right).$$
All formulas hereinafter are correct, since for H-holomorphic functions the left and the right quotients are equal \cite[\!\!p.\!\!~20]{pm:eac} and we can use any of them. 

It follows that $$\frac{1}{\left(p-p_{0}\right)}+\frac{h}{\left(p-p_{0}\right)^{2}}+\cdots +\frac{h^{m}}{\left(p-p_{0}\right)^{m+1}}=\frac{1}{p-p_{0}-h}-\frac{h^{m+1}}{\left(p-p_{0}\right)^{m+1}\left(p-p_{0}-h\right)}.$$ Hence we have the following identity:
\begin{align*} 
\frac{1}{p-\left(p_{0}+h\right)}&=\frac{1}{p-p_{0}-h}=\frac{1}{\left(p-p_{0}\right)}+\frac{h}{\left(p-p_{0}\right)^{2}}+\cdots\\  
&+\frac{h^{m}}{\left(p-p_{0}\right)^{m+1}}+ \frac{h^{m+1}}{\left(p-p_{0}\right)^{m+1}\left(p-p_{0}-h\right)}.
\end{align*}

Fix $h$ such that $0<\mid \! h \!\mid <R$ and suppose that $\mid \! h \!\mid<r<R$. Then
Cauchy’s Integral formula, together with the above identity, gives
\[ \begin{split}
f\left(p_{0}+h\right)=&\frac{1}{2\pi I}\int_{C_{r}}\frac{f\left(p\right)}{p-\left(p_{0}+h\right)}dp       =\frac{1}{2\pi I}\int_{C_{r}}f\left(p\right)\left[\frac{1}{\left(p-p_{0}\right)}+\frac{h}{\left(p-p_{0}\right)^{2}}+\cdots \right. \\ &\left.+\frac{h^{m}}{\left(p-p_{0}\right)^{m+1}}+\frac{h^{m+1}}{\left(p-p_{0}\right)^{m+1}\left(p-p_{0}-h\right)}\right]dp=\sum_{n=0}^{m}a_{n}h^{n}+A_{m},
\end{split}  \]
where
\begin{equation} \label{Eq22}
a_{n}=\frac{1}{2\pi I}\int_{C_{r}}\frac{f\left(p\right)}{\left(p-p_{0}\right)^{n+1}}dp
\end{equation}
and $$A_{m}=\frac{1}{2\pi I}\int_{C_{r}}\frac{f\left(p\right)h^{m+1}}{\left(p-p_{0}\right)^{m+1}\left(p-p_{0}-h\right)}dp.$$
Now we will show that $A_{m}\rightarrow 0$ as $m\rightarrow\infty.$
 
 As $f$ is differentiable (H-holomorphic) on $C_{r},$ it is bounded. So there exists $M>0$ such that $\mid\! f\left(p\right) \!\mid \leq M$ for all $p$ on $C_{r}.$ By the reverse triangle inequality, using the fact that $\mid h \mid<r =\mid p-p_{0}\mid$ for $p$ on $C_{r}$, we
have that $$\mid p-p_{0}-h\mid\geq \mid \mid p-p_{0} \mid -\mid h \mid \mid=r\, - \mid h \mid\!.$$ 
Hence, by the Estimation Lemma (Lemma \ref{le3.2}) $$\mid A_{m} \mid\leq\frac{1}{2\pi}\frac{M {\mid h\mid}^{m+1}}{r^{m+1}\left(r\, -\mid h\mid\right)}2\pi r=\frac{M\mid h\mid}{r\, -\mid h\mid}\left(\frac{\mid h\mid}{r}\right)^{m}.$$ 
Since $\mid h \mid<r,$ this tends to zero as $m\rightarrow \infty.$ Hence $$f\left(p_{0}+h\right)=\sum_{n=0}^{\infty}a_{n}h^{n}$$ for $\mid h \mid<R$ with $$a_{n}=\frac{1}{2\pi I}\int_{C_{r}}\frac{f\left(p\right)}{\left(p-p_{0}\right)^{n+1}}dp,$$
provided that $r$ satisfies $\mid h \mid<r<R.$ However, the integral is unchanged if we vary $r$ in
the whole range $0<r<R$. Hence this formula is valid for the whole of this range of $r.$

Finally, we put $ h=p-p_{0}.$ Then we have that 
\begin{equation} \label{Eq23}
f\left(p\right)=\sum_{n=0}^{\infty}a_{n}{\left(p-p_{0}\right)}^{n}
\end{equation}
 for $\mid p-p_{0}\mid <R,$ with $a_{n}$ given as above. According to \cite[\!Proposition 3.9,\! \!p. \!10]{pm:onde} a quaternionic power series can be differentiated term-by-term as many times as we please and
\begin{equation} \label{Eq24}
a_{n}=\frac{f^{\left(n\right)}\left(p_{0}\right)}{{n!}}.
\end{equation}
Combining (\ref{Eq22}) and (\ref{Eq24}) we obtain as follows:$$f^{\left(n\right)}\left(p_{0}\right)=\frac{n!}{2\pi I}\int_{C_{r}}\frac{f\left(p\right)}{\left(p-p_{0}\right)^{n+1}}dp.$$ This completes the proof of the theorem.
\end{proof}
\subsection{Quaternionic imaginary unit as a H-holomorphic function}
To have  H-holomorphic terms $r_{k}p^{k}$ of power series  it was shown in  \cite[formula (24)]{pm:onde} that  the power series  coeffitients $r_{k},\,k=0,1,2,\dots,$ must be real-valued constants. There is no contradiction to formula (\ref{Eq22}), where Taylor's series coeffitients $a_{n}$  have the dependence on the quaternionic imaginary unit $I$ (generally can be written as $\eta\left(I\right)$).  We will show that the quaternionic imaginary unit $I$ regarded as a function of the variables $a,\overline{a},b,\overline{b}$ is H-holomorphic. Its compositions $\eta\left(I\right)$ in accordance with \cite{pm:oncc} are also H-holomorphic. Hence the terms $a_{n}h^{n}$ of Taylor's expansions are H-holomorphic as products of the H-holomorphic functions (coeffitients) $a_{n}$ and $h^{n}$.

Using (\ref{Eq2}) we obtain \cite[\!p.\!\!~13]{pm:oncc}  the quaternionic imaginary unit $I$ as follows: 
\begin{equation} 
I=\frac{yi+zj+uk}{\sqrt{y^{2}+z^{2}+u^{2}}}=I\left(a,b,\overline{a}, \overline{b} \right)=\Phi_{1}\left(a,b,\overline{a}, \overline{b} \right)+\Phi_{2\cdot}\left(a,b,\overline{a}, \overline{b} \right)\cdot j,
\end{equation}
where
\begin{align} 
&\Phi_{1}\left(a,b,\overline{a}, \overline{b} \right)=\frac{a-\overline{a}}{\sqrt{-\left(a+\overline{a}\right)^{2}+4\left(a\overline{a}+b\overline{b}\right)}},\\  
&\Phi_{2\cdot}\left(a,b,\overline{a}, \overline{b} \right)=\frac{2b}{\sqrt{-\left(a+\overline{a}\right)^{2}+4\left(a\overline{a}+b\overline{b}\right)}}.
\end{align}
Correspondingly, the conjugate of the function $I\left(a,b,\overline{a}, \overline{b} \right)$ is the following:
\begin{equation*} 
\overline{I\left(a,b,\overline{a}, \overline{b} \right)}=\Phi_{1}\left(a,b,\overline{a}, \overline{b} \right)+\Phi_{2\cdot}\left(a,b,\overline{a}, \overline{b} \right)\cdot j,
\end{equation*}
where
\begin{align*} 
&\Phi_{1}\left(a,b,\overline{a}, \overline{b} \right)=\frac{\overline{a}-a}{\sqrt{-\left(a+\overline{a}\right)^{2}+4\left(a\overline{a}+b\overline{b}\right)}},\\  
&\Phi_{2\cdot}\left(a,b,\overline{a}, \overline{b} \right)=-\frac{2b}{\sqrt{-\left(a+\overline{a}\right)^{2}+4\left(a\overline{a}+b\overline{b}\right)}}.
\end{align*}
To check the H-holomorphy of the function $\psi\left(p\right)=I\left(a,\overline{a},b,\overline{b}\right)$ we compute the necessary partial derivatives of the functions $\Phi_{1}\left(a,b,\overline{a}, \overline{b} \right)$ and $\Phi_{2}\left(a,b,\overline{a}, \overline{b} \right)\!:$
$\partial_{a}\Phi_{1}=\frac{4b\overline{b}}{\left(-a^{2}+2a\overline{a}-\overline{a}^{2}+4b\overline{b}\right)^{\frac{3}{2}}}, \, \partial_{\overline{b}}\overline{\Phi_{2} }=\partial_{b}\Phi_{2}=-\frac{2\left(a^{2}-2a\overline{a}+\overline{a}^{2}-2b\overline{b}\right)}{\left(-a^{2}+2a\overline{a}-\overline{a}^{2}+4b\overline{b}\right)^{\frac{3}{2}}},\, \partial_{\overline a}\Phi_{2}=-\partial_{\overline{b}}\overline{\Phi_{1}}=\frac{2\left(\overline{a}-a\right)b}{\left(-a^{2}+2a\overline{a}-\overline{a}^{2}+4b\overline{b}\right)^{\frac{3}{2}}}$, and  $\partial_{a}\Phi_{2}=-\partial_{\overline{b}}\Phi_{1}=\frac{2\left(a-\overline{a}\right)b}{\left(-a^{2}+2a\overline{a}-\overline{a}^{2}+4b\overline{b}\right)^{\frac{3}{2}}}.$ 

 After performing the transition $a=\overline{a}=x$ (to 3D space) we see that H-holomorphy equations (\ref{Eq3}) for the function $I=I\left(a,b,\overline{a}, \overline{b} \right)$ are fulfilled:
\begin{equation*}  \left\{
\begin{aligned}
1)\,\,\,(\,\partial_{a}\Phi_{1}\!\!\mid \,\,=(\,\partial_{\overline{b}}\overline{\Phi_{2} }\! \mid=\frac{1}{2\mid b \mid}, \,\,\,\,\,\,\,\,\,2)\,\,\,(\,\partial_{a}\Phi_{2}\!\!\mid &=-\,(\,\partial_{\overline{b}}\overline{\Phi_{1} }\! \mid=0
,\\
3)\,\,\,(\,\partial_{a}\Phi_{1}\!\!\mid \,\,=(\,\partial_{b}\Phi_{2}\! \mid=\frac{1}{2\mid b \mid}, \,\,\,\,\,\,\,\,\,4)\,\,\,(\,\partial_{\overline{a}}\Phi_{2}\!\!\mid &=-\,(\,\partial_{\overline{b}}\Phi_{1}\!\! \mid=0 .
\end{aligned}
\right. 
\end{equation*}
Thus, the quaternionic imaginary unit $I$ regarded as a function of variables $a,\overline{a},b,\overline{b}$   is H-holomorphic. 

Now we show that the H-holomorphic function $I\left(p\right)=I\left(a,b,\overline{a}, \overline{b} \right)$ does not change when changing the variable $p$, i.e. the first full derivative of this function with respect to $p$ is equal to zero.
Substituting the partial derivative $\partial_{\overline{a}}\Phi_{1}=-\frac{4b\overline{b}}{\left(-a^{2}+2a\overline{a}-\overline{a}^{2}+4b\overline{b}\right)^{\frac{3}{2}}}$ as well as the other needed partial derivatives calculated above into formulae (\ref{Eq4}) and (\ref{Eq5}) at $k=1$ we get the following expression for the first full quaternionic derivative of the function $I\left(p\right)$:
\begin{align*}
\left(I\left(p\right)\right)^{{\left(1\right)}} &=\Phi_1^{(1)}+\Phi_2^{(1)}\cdot j=\left(\partial_{a}\Phi_1\!+\partial_{\overline{a}}\Phi_1\right)+\left(\partial_{a}\Phi_2\!+\partial_{\overline{a}}\Phi_2\right)
 \cdot j \\ &=\left(\frac{4b\overline{b}}{\left(-a^{2}+2a\overline{a}-\overline{a}^{2}+4b\overline{b}\right)^{\frac{3}{2}}}-\frac{4b\overline{b}}{\left(-a^{2}+2a\overline{a}-\overline{a}^{2}+4b\overline{b}\right)^{\frac{3}{2}}}\right)\\&+\left(\frac{2\left(a-\overline{a}\right)b}{\left(-a^{2}+2a\overline{a}-\overline{a}^{2}+4b\overline{b}\right)^{\frac{3}{2}}}-\frac{2\left(a-\overline{a}\right)b}{\left(-a^{2}+2a\overline{a}-\overline{a}^{2}+4b\overline{b}\right)^{\frac{3}{2}}}\right)\cdot j\\&=0+0 \dot j=0.
\end{align*}
Thus, we confirm that the quaternionic imaginary unit $I$ can be  regarded as a constant that does not depend on $p$ (as noted above) when differentiating and integrating with respect to $p,$ correspondingly $t$.  Likewise we can speak of the H-holomorphic function of the conjugate quaternionic unit $\overline{I\left(a,b,\overline{a}, \overline{b} \right)}$.

\section{Laurent series and singularities} 
 The idea of Laurent series is to generalise (\ref{Eq23}) to allow negative powers of $\left(p-p_{0}\right)$. We adapt  the results of complex theory presented in \cite[\!\!p.\!\!~225]{st:ta} to the quaternion case.
\subsection{Laurent series}
\begin{Definition}
A quaternionic Laurent series is a series of the form
\begin{equation} \label{Eq28}
f\left(p\right)=\sum_{n=-\infty}^{\infty}a_{n}{\left(p-p_{0}\right)}^{n}.
\end{equation}
\end{Definition} Since (\ref{Eq28}) is a doubly infinite sum, we need to take care as to what it means.  Just as in complex analysis we define (\ref{Eq28}) to mean
\begin{equation*} 
\sum_{n=1}^{\infty}a_{-n}{\left(p-p_{0}\right)}^{-n}+\sum_{n=0}^{\infty}a_{n}{\left(p-p_{0}\right)}^{n})=\Sigma^{-}+\Sigma^{+}.
\end{equation*}

The first question to address is when does (\ref{Eq28}) converge? For this, we need both $\Sigma^{-}$ and $\Sigma^{+}$ to converge.
We can recognise $\Sigma^{-}$ as a power series in $\left(p-p_{0}\right)^{-1}$. This has a radius of convergence equal to, say, $R_1^{-1}\geq 0$, i.e. $\Sigma^{-}$ converges when $\mid \left(p-p_{0}\right)^{-1} \mid<R_1^{-1}$. In other words, $\Sigma^{-}$ converges when $\mid p-p_{0}\!\!\mid >R_{1}$.

Now $\Sigma^{+}$ converges for $\mid p-p_{0}\!\!\mid <R_{2}$ for some $ R_{2}>0$, where $R_{2}$ is the radius of convergence of $\Sigma^{+}$.

Combining these, we see that if $0\leq R_{1}<R_{2}\leq\infty$ then (\ref{Eq28}) converges on the H-annulus $$\left\{p\in\mathbb{H}\mid R_{1}<\mid p-p_{0} \mid <R_{2}\right\}.$$ 

We begin with an existence theorem for series expansions of the above kind. Given the similarity of the C- and H-representations, we can rewrite the known complex theorem \cite[\!\!p.\!\!~226]{st:ta} for the quaternion area as follows.
\begin{theorem} \label{th4.2} (Quaternionic generalization of Laurent’s theorem) 
If $f$ is differentiable (H-holomorphic) in the H-annulus $R_{1}\leq \mid p-p_{0} \mid \leq R_{2}$ where $0\leq R_{1}<R_{2}\leq\infty$, then
\begin{equation*} 
f\left(p_{0}+h\right)=\sum_{n=0}^{\infty}a_{n}{h}^{n}+\sum_{n=1}^{\infty}b_{n}{h}^{-n}
\end{equation*}
where $\sum_{n=0}^{\infty}a_{n}{h}^{n}$ converges for $\mid h \mid <R_{2}$, $\sum_{n=1}^{\infty}b_{n}{h}^{-n}$ converges for $\mid h \mid >R_{1}$, and in particular both sides converge in the interior of the H-annulus.

Further, if $C_{r}\left(t\right)=p_{0}+r e^{It}\!,\,t\in\left[0,2\pi\right]$ then
\begin{align*} 
&a_{n}=\frac{1}{2\pi I}  \int_{C_{r}}\frac{f\left(p\right)}{\left(p-p_{0}\right)^{n+1}} dp,\\  
&b_{n}=\frac{1}{2\pi I}\int_{C_{r}}f\left(p\right)\left(p-p_{0}\right)^{n-1}\,dp.
\end{align*}
\end{theorem}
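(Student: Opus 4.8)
The plan is to imitate the proof of the Taylor theorem (Theorem~\ref{th3.6}), but now on an annular region bounded by two concentric H-circles, using the generalized Cauchy Integral Formula (Theorem~\ref{thgg}) and the generalized Cauchy Theorem (Theorem~\ref{th3.4}) already at hand. First I would fix $h$ with $R_{1}<\mid h\mid<R_{2}$ and choose radii with $R_{1}<r_{1}<\mid h\mid<r_{2}<R_{2}$. The H-circles $C_{r_{1}}$ and $C_{r_{2}}$ centred at $p_{0}$ bound an H-annulus containing $p_{0}+h$; applying Cauchy's Integral Formula to this region, with $C_{r_{2}}$ positively oriented and $C_{r_{1}}$ negatively oriented, gives
\begin{equation*}
f\left(p_{0}+h\right)=\frac{1}{2\pi I}\int_{C_{r_{2}}}\frac{f\left(p\right)}{p-\left(p_{0}+h\right)}\,dp-\frac{1}{2\pi I}\int_{C_{r_{1}}}\frac{f\left(p\right)}{p-\left(p_{0}+h\right)}\,dp.
\end{equation*}
Throughout I would use, as in the earlier proofs, that the left and right quotients coincide for H-holomorphic functions and that $I$ may be treated as a constant under integration with respect to $t$.

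For the outer integral over $C_{r_{2}}$ I would reuse verbatim the geometric-series identity of the Taylor proof: since there $\mid h\mid<r_{2}=\mid p-p_{0}\mid$, expanding $\tfrac{1}{p-(p_{0}+h)}$ in powers of $h/(p-p_{0})$ and estimating the remainder by the Estimation Lemma (Lemma~\ref{le3.2}) yields the regular part $\sum_{n=0}^{\infty}a_{n}h^{n}$ with $a_{n}=\tfrac{1}{2\pi I}\int_{C_{r}}\tfrac{f(p)}{(p-p_{0})^{n+1}}\,dp$, convergent for $\mid h\mid<R_{2}$.

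For the inner integral over $C_{r_{1}}$ the roles of $h$ and $p-p_{0}$ are reversed: there $\mid p-p_{0}\mid=r_{1}<\mid h\mid$, so I would rewrite
\begin{equation*}
\frac{1}{p-\left(p_{0}+h\right)}=-\frac{1}{h}\cdot\frac{1}{1-\frac{p-p_{0}}{h}}=-\sum_{n=1}^{m}\frac{\left(p-p_{0}\right)^{n-1}}{h^{n}}+\frac{\left(p-p_{0}\right)^{m}}{h^{m}\left(p-\left(p_{0}+h\right)\right)},
\end{equation*}
a finite geometric sum plus remainder, now in powers of $(p-p_{0})/h$ whose modulus is $r_{1}/\mid h\mid<1$. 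Integrating term by term and carrying the leading minus sign from the negatively oriented $C_{r_{1}}$ produces the principal part $\sum_{n=1}^{\infty}b_{n}h^{-n}$ with $b_{n}=\tfrac{1}{2\pi I}\int_{C_{r}}f(p)(p-p_{0})^{n-1}\,dp$; the orientation sign cancels the sign from factoring out $-1/h$, giving exactly the stated positive-signed coefficients. The remainder would again be shown to vanish as $m\to\infty$ by Lemma~\ref{le3.2}, this time using the reverse triangle inequality $\mid p-(p_{0}+h)\mid\geq\mid h\mid-r_{1}$ on $C_{r_{1}}$.

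Finally I would invoke the generalized Cauchy Theorem (Theorem~\ref{th3.4}) to argue that the integrands $\tfrac{f(p)}{(p-p_{0})^{n+1}}$ and $f(p)(p-p_{0})^{n-1}$ are H-holomorphic throughout the annulus, so each defining integral is unchanged when $C_{r_{1}}$ or $C_{r_{2}}$ is replaced by any concentric H-circle $C_{r}$ with $R_{1}<r<R_{2}$; this lets both coefficient formulas be written over a single common $C_{r}$. I expect the main obstacle to be the inner expansion, which is genuinely new compared with the Taylor case: I must track the orientation sign of $C_{r_{1}}$ together with the sign from $-1/h$, and verify that the remainder estimate survives division by $p-(p_{0}+h)$ as $p$ ranges over the inner circle. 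The two convergence claims, for $\mid h\mid<R_{2}$ and $\mid h\mid>R_{1}$, then follow from the radius-of-convergence discussion preceding the theorem.
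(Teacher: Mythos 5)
Your proposal is correct and follows essentially the same route as the paper: the two-circle representation of $f(p_{0}+h)$ obtained from the generalised Cauchy Theorem and the circle Integral Formula, the geometric-series expansion in $h/(p-p_{0})$ on the outer circle and in $(p-p_{0})/h$ on the inner circle with remainders killed by the Estimation Lemma, and a final deformation of both circles to a common $C_{r}$. If anything, your version is tidier about which circle yields which part (the paper's own write-up swaps the roles of $C_{r_{1}}$ and $C_{r_{2}}$ partway through), so no substantive gap remains.
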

\begin{proof}
We adapt the proof given in \cite[\!p.\!\!~226]{st:ta} to the quaternionic case. If $R_{1}<\mid h \mid <R_{2}$, choose $r_{1},\,r_{2}$ so that $$R_{1}<r_{1}<\mid h \mid <r_{2}<R_{2}$$ and let
\begin{align*} 
&C_{r_{1}}\left(t\right)=p_{0}+r_{1} e^{It},t\in\left[0,2\pi\right],\\  
&C_{r_{2}}\left(t\right)=p_{0}+r_{2} e^{It},t\in\left[0,2\pi\right]
\end{align*}
as shown in Fig.\! 2.

We show first that
\begin{equation} \label{Eq29}
f\left(p_{0}+h\right) =\frac{1}{2\pi I} \int_{C_{r_{1}}}\frac{f\left(p\right)dp}{\left(p-\left(p_{0}+h\right)\right)}-\frac{1}{2\pi I} \int_{C_{r_{2}}}\frac{f\left(p\right)dp}{\left(p-\left(p_{0}+h\right)\right)}. 
\end{equation}

Enclose $p_{0}+h$ in a small H-circle $$C_{\varepsilon}\left(t\right)=p_{0}+h+\varepsilon e^{It},t\in\left[0,2\pi\right].$$  

\begin{tikzpicture}[closed hobby, >=stealth] 
\centering \path [draw=none, fill=gray, even odd rule, fill opacity = 0.2] (0,-1) circle (2.8) (0,-1) circle (0.4); 
\draw[->,thin] (-3,-4)--(2.9,-4) node[right]{$\textit{x}$}; 
\draw[->,thin] (-3,-4)--(-3,1.7) node[above]{$\textit{vI}$}; 
\draw[fill=black](0,-1) circle (2 pt) node [black,yshift=0.16cm, xshift=-0.1cm, font=\scriptsize] {$p_{0}$};
   \draw [number of arrows=1] ((1.4,0.01) circle (0.55);  
 \draw[ fill=none] node [black,  yshift=-0.65cm,xshift=1.9cm] {$C_{\varepsilon}$}; 
\coordinate (p0+h) at (1.1:1); 
\fill (p0+h) (1.4,-0.06) circle (2pt);
 \node [above, black, yshift=-0.12cm, xshift=0.4cm,  font=\scriptsize] at (p0+h) {$p_0+h$}; 
\draw [number of arrows=1](0,-1) circle (0.7); 
\draw[fill=none] node [black,yshift=-2.0cm] {$C_{r_{1}}$};
\draw[fill=none, number of arrows=1](0,-1) circle (2.5) node [black,yshift=-1.5cm, xshift=1.5cm] {$C_{r_{2}}$}; 
\node at (0,-4.5) {$\text{Fig. \!2. An annulus, showing paths used to prove Theorem 4.2.}$};
 \end{tikzpicture}
 
 \hspace{20mm}

Recall that we can  use  both the left  and the right quotients of $f\left(p\right)$ by $\left(p-\left(p_{0}+h\right)\right)$ in (\ref{Eq29}), since  they are equal in the case of H-holomorphic functions. 
\\Then the function $$\frac{f\left(p\right)}{\left(p-\left(p_{0}+h\right)\right)}$$
is H-differentiable in $S=\left\{p \in \mathbb{H}:R_{1}<\mid p-p_{0} \mid<R_{2},p\neq p_{0}+h\right\}$ and the contours $-C_{r_{2}}$ (with negative orientation), $\,C_{r_{1}},\,C_{\varepsilon}$ satisfy the hypotheses of Theorem  \ref{th3.4}: $\omega\left(C_{r_{1}},p_{0}\right)+\omega\left(-C_{r_{2}},p_{0}\right)+\omega\left(\,C_{\varepsilon},p_{0}\right)=1-1+0=0$ (see also Fig.1). Therefore $$\int_{C_{r_{1}}}f\left(p\right)dp\,+\int_{-C_{r_{2}}}f\left(p\right)dp\,+\int_{C_{\varepsilon}}f\left(p\right)dp=0.$$ Hence $$\int_{C_{\varepsilon}}f\left(p\right)dp=\int_{C_{r_{2}}}f\left(p\right)dp\,-\int_{C_{r_{1}}}f\left(p\right)dp.$$

However, by Cauchy’s Integral Formula $$\int_{C_{\varepsilon}}f\left(p\right)dp=2\pi I f\left(p_{0}+h\right)$$ giving (\ref{Eq29}). 

All we need to do now is work out the two integrals in (\ref{Eq29}) as power series, and calculate the coefficients. First, we choose $\rho_{1},\,\rho_{2}$ such that $$r_{1}<\rho_{1}< \mid h \mid<\rho_{2}<r_{2},$$ which enforces the conditions: $$(i)\,\mid p-\left(p_{0}+h\right) \mid>\rho_{1}-r_{1}\,\,\text{for}\,p\,\text{on}\,C_{r_{1}};$$ and $$(ii)\,\mid p-\left(p_{0}+h\right) \mid>r_{2}-\rho_{2}\,\,\text{for}\,p\,\text{on}\,C_{r_{2}}.$$
As in the proof of the Taylor series expansion (Theorem \ref{th3.6}) we get $$\frac{1}{2\pi I}\int_{C_{r_{1}}}\frac{f\left(p\right)}{p-\left(p_{0}+h\right)}dp=\sum_{n=0}^\infty a_{n} h^{n} $$ for $\mid h \mid<\rho_{2},$ where $$a_{n}=\frac{1}{2\pi I}\int_{C_{r_{1}}}\frac{f\left(p\right)}{\left(p-p_{0}\right)^{n+1}}dp.$$

Since $$\frac{1}{h}+\frac{p-p_{0}}{h^{2}}+\cdots+\frac{\left(p-p_{0}\right)^{n-1}}{h^{n}}-\frac{\left(p-p_{0}\right)^{n}}{h^{n}\left(p-p_{0}-h\right)}=\frac{-1}{p-p_{0}-h}$$ (summing a geometric series) we have
\[ \begin{split}
-&\frac{1}{2\pi I}\int_{C_{r_{2}}}\frac{f\left(p\right)}{p-p_{0}-h}dp  =\frac{1}{2\pi I}\int_{C_{r_{2}}}f\left(p\right) \left[  \frac{1}{h}+\frac{p-p_{0}}{h^{2}}+\cdots   \right. \\ &\left.   +\frac{\left(p-p_{0}\right)^{n-1}}{h^{n}}-\frac{\left(p-p_{0}\right)^{n}}{h^{n}\left(p-p_{0}-h\right)}\right]dp=\sum_{m=1}^{n}b_{m}h^{-m}-B_{n},
\end{split}  \]
where 
\begin{align*}
&b_{m}=\frac{1}{2\pi I}\int_{C_{r_{2}}}f\left(p\right)\left(p-p_{0}\right)^{m-1}dp\\
&B_{n}=\frac{1}{2\pi I}\int_{C_{r_{2}}}\frac{f\left(p\right)\left(p-p_{0}\right)^{n}}{h^{n}\left(p-p_{0}-h\right)}dp.
\end{align*}
Finally we estimate the size of $B_{n}.$ There exists $M>0$ such that $\mid f\left(p\right) \mid\leq M$ on $C_{r_{1}}.$ By (i) above, we have $\mid p-p_{0}-h \mid>\rho_{1}-r_{1}.$ Also $\mid h \mid >\rho_{1}, \,\,\mid p-p_{0} \mid=r_{1}.$ Hence, by Lemma \ref{le3.2}, $$\mid B_{n} \mid\leq\frac{1}{2\pi}\frac{Mr_1^n}{\rho_1^n \left(\rho_{1}-r_{1}\right)}2\pi r_{1}=\frac{Mr_{1}}{\rho_{1}-r_{1}}\left(\frac{r_{1}}{\rho_{1}}\right)^{n},$$
which tends to zero as $ n\rightarrow\infty,$ since $\frac{r_{1}}{\rho_{1}}< 1.$ It follows that $$-\frac{1}{2\pi I}\int_{C_{r_{2}}}\frac{f\left(p\right)}{\left(p-p_{0}-h\right)}dp=\sum_{m=1}^\infty b_{m} h^{-m}.$$

To finish the proof we must replace $C_{r_{1}}$ and $C_{r_{2}}$ by $C_{r}$ in the expressions for $a_{n}$ and $b_{n}.$ Since all three paths are homotopic (the paths are homotopic if one can be continuously deformed into the other) inside the annulus, this is
immediate.
\end{proof}
\subsection{Singularities and Poles}
Using the H-representation  of a plane,  i.e. the H-plane, it is not difficult to adapt the complex definitions of singularities and poles \cite[\!p.\!\!~230]{st:ta} ,  \cite[\!p.\!\!~232]{mh:ca} to the quaternion case.
\begin{Definition} 
A singularity of a function $f\left(p\right)$  is a point $p_{0}$ on a H-plane at which $f\left(p\right)$ is not H-differentiable, but every neighborhood  of $p_{0}$ contains at least one point at which $f\left(p\right)$ is H-differentiable.
\end{Definition}
\begin{Definition} 
If there exists a punctured H-disc $0<\mid p-p_{0} \mid <R$ such that  $f\left(p\right)$ is H-differentiable on this  disc then we say that $p_{0}$ is an isolated singularity of  $f\left(p\right)$.
\end{Definition}
Further, we will only be interested in isolated singularities. Assume that $f\left(p\right)$ has an isolated singularity at $p_{0}$ on a H-plane. Then $f\left(p\right)$  is H-holomorphic on an annulus of the form $\left\{p\in \mathbb{H}\mid 0<\mid p-p_{0} \mid <R \right\}$. We expand $f\left(p\right)$ as a Laurent series around $p_{0}$ on this annulus and obtain $$\sum_{n=0}^{\infty}a_{n}{\left(p-p_{0}\right)}^{n}+\sum_{n=1}^{\infty}b_{n}{\left(p-p_{0}\right)}^{-n}.$$ This is valid for $0<\mid p-p_{0} \mid <R.$ Consider the principal part of the Laurent series: 
 \begin{equation} \label{Eq30}
\sum_{n=1}^{\infty}b_{n}{\left(p-p_{0}\right)}^{-n}.
\end{equation}
There are three possibilities: the principal part of $f\left(p\right)$ may have

(i) no terms,

(ii) a finite number of terms,

(iii) an infinite number of terms.
\subsubsection{Removable singularities.}
Suppose that $f\left(p\right)$ has an isolated singularity at $p_{0}$ on a H-plane and that the principal part of the Laurent series  (\ref{Eq30}) has no terms in it. In this case, for $0<\mid p-p_{0} \mid <R$ we have that $$f\left(p\right)=a_{0}+a_{1}\left(p-p_{0}\right)+\cdots +a_{n}\left(p-p_{0}\right)^{n}+\cdots.$$ The radius of convergence of this power series is at least $R$, and so  $f\left(p\right)$ extends to a function
that is H-differentiable at $p_{0}$.\\
\textbf{Example}. Consider the function $$f\left(p\right)=\frac{\sin p}{p},\,\,\,p\neq 0.$$ Then $f\left(p\right)$ has an isolated singularity at $p=0$ as $f\left(p\right)$ is not defined at $p = 0.$ But we know \cite[\!p.\!\!~231]{st:ta} that $$\frac{\sin p}{p}=1-\frac{p^{2}}{3!}+\frac{p^{4}}{5!}-\cdots$$ for $p\neq 0.$ Define $f\left(0\right)=1.$ Then $f\left(p\right)$ is H-differentiable for all $p\in\mathbb{H}$. Hence  $f\left(p\right)$ has a removable singularity at $p=0.$
\subsubsection{Poles}
Suppose that $f\left(p\right)$ has an isolated singularity at $p_{0}$ and that the principal part of the Laurent series (\ref{Eq30}) has finitely many terms in it. In this case, for  $0<\mid p-p_{0} \mid <R$ we can write
$$f\left(p\right)=\frac{b_{m}}{\left(p-p_{0}\right)^{m}}+\cdots +\frac{b_{1}}{p-p_{0}}+\sum_{n=0}^{\infty}a_{n}\left(p-p_{0}\right)^{n},$$ where $b_{m}\neq 0.$ Then we say that $f\left(p\right)$ has a pole of order $m$ at $p_{0}$. A pole of order $1$ is called a simple pole.\\
\textbf{Example}. \label{ex1}
Let $$f\left(p\right)=\frac{\sin p}{p^{4}},\,\,\,p\neq 0.$$ This function  has an isolated singularity at $p=0$. We can write \cite[\!p.\!\!~231]{st:ta} 
$$\frac{\sin p}{p^{4}}=\frac{1}{p^{3}}-\frac{1}{3!}\frac{1}{p}+\frac{1}{5!}p\,-\frac{1}{7!}p^{3}+\cdots.$$ Hence $f\left(p\right)$ has a pole of order 3 at $p = 0$.

\subsubsection{Isolated essential singularities}
Suppose that $f\left(p\right)$ has an isolated singularity at $p_{0}$ and that the principal part of the Laurent series (\ref{Eq30}) has infinitely many terms in it. In this case we say that $f\left(p\right)$ has an isolated
essential singularity at  $p_{0}$.\\
\textbf{Example}.
Let $f\left(p\right)=\sin \frac{1}{p},\,\,p\neq 0$ (see complex analogue in \cite[\!p.\!\!~231]{st:ta}).  Then $f\left(p\right)$ has a singularity at $p=0$ and we have the series  $$\sin \frac{1}{p}=\frac{1}{p}-\frac{1}{3!}\frac{1}{p^{3}}+\frac{1}{5!}\frac{1}{p^{5}}-\cdots .$$ Hence  $f\left(p\right)$ has an isolated essential singularity at $p = 0$.

Recall that the main idea of this article is adaptation of every definition, theorem etc. of the complex integration to the quaternionic case by replacing a complex variable $\xi$ by a quaternionic $p$ and the imaginary unit $i$ by the quaternionic $I$. Therefore we have many repeatings known definitions, theorems and proofs from complex analysis on a new level of  quaternionic variables.

\section{Cauchy’s Residue Theorem} 
\subsection{Residues}
On the H-plane we can introduce the following definition.
\begin{Definition}
Suppose that $f\left(p\right)$ is H-holomorphic on a domain $D$ except for an isolated singularity at $ p_{0}\in D.$ Suppose that on 
$\left\{p\in \mathbb{H}\mid 0<\mid p-p_{0} \mid <R \right\}$, $f\left(p\right)$ has the Laurent expansion $$\sum_{n=0}^{\infty}a_{n}{\left(p-p_{0}\right)}^{n})+\sum_{n=1}^{\infty}b_{n}{\left(p-p_{0}\right)}^{-n}.$$
The residue of $f\left(p\right)$ at $p_{0}$ is defined to be $$Res\left(f,p_{0}\right)=b_{1}.$$ That is, the residue of $f\left(p\right)$ at the isolated singularity $p_{0}$ is the coefficient of $\left(p- p_{0}\right)^{-1}$ in the Laurent expansion.
\end{Definition}

Let $0<r<R.$ By Laurent’s Theorem (Theorem \ref{th4.2}) we have the alternative expression $$b_{1}=\frac{1}{2\pi I}\int_{C_{r}}f\left(p\right)\,dp,$$
where $C_{r}\left(t\right)=p_{0}+re^{It},\,\,t\in \left[0,2\pi\right]$ is a circular anticlockwise path around $ p_{0}$ in the annulus
of convergence. We further make the following definition.
\begin{Definition}
A closed contour $\gamma$ is said to be a simple closed loop if, for every point $p$ not on $\gamma$, the winding number is either $\omega\left( \gamma, p \right)=0$ or $\omega\left( \gamma, p \right)=1.$ If  $\omega\left( \gamma, p \right)=1$ then we say that $p$ is inside $\gamma$.
\end{Definition}
Thus a simple closed loop is a loop that goes round anticlockwise in a loop on H-plane once, and without intersecting itself.
\begin{theorem} \label{th5.3} (Cauchy’s Residue Theorem on a H-plane) 
Let $D$ be an open domain containing a simple closed loop $\gamma$ with the positive orientation and the points inside $\gamma$. Suppose that $f\left(p\right)$ is H-holomorphic on $D$ except for finitely many poles at $p_{1},p_{2}, \dots ,p_{n} $ inside $\gamma$. Then 
 \begin{equation*}
\int_{\gamma}f\left(p\right)dp=2\pi I\sum_{j=1}^n Res\left(f,p_{j}\right).
\end{equation*}
\end{theorem}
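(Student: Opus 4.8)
The plan is to reduce the integral around $\gamma$ to a sum of integrals around small H-circles, one enclosing each pole, and then to identify each of those circle integrals with the corresponding residue. First I would surround each pole $p_j$ by a small positively oriented H-circle $C_j(t)=p_j+\varepsilon_j e^{It}$, $t\in[0,2\pi]$, with $\varepsilon_j$ chosen small enough that the closed H-disc bounded by $C_j$ lies inside $\gamma$ and contains no pole other than $p_j$. On the set $D'=D\setminus\{p_1,\dots,p_n\}$, where $f$ is H-holomorphic, the closed contours $\gamma,-C_1,\dots,-C_n$ all lie in $D'$, and I would apply the generalised Cauchy Theorem (Theorem \ref{th3.4}) to this collection.

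To invoke Theorem \ref{th3.4} I must check that the winding numbers sum to zero at every point $p\notin D'$. For a point lying outside $D$ all the winding numbers vanish, since $\gamma$ and each $C_j$ bound regions contained in $D$. For a pole $p_k$ I would compute $\omega(\gamma,p_k)+\sum_{j}\omega(-C_j,p_k)=1-1+0=0$, because $p_k$ is inside $\gamma$ and inside only the circle $C_k$. As the paper stresses, H-paths on the H-plane look exactly like C-paths on the C-plane, so these winding-number computations are identical to the complex ones. Theorem \ref{th3.4} then gives
\begin{equation*}
\int_\gamma f\left(p\right)dp=\sum_{j=1}^{n}\int_{C_j} f\left(p\right)dp.
\end{equation*}

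To finish, each circle integral is evaluated via the Laurent coefficient formula of Theorem \ref{th4.2}. Expanding $f$ in a Laurent series about $p_j$ on the punctured H-disc $0<\mid p-p_j\mid<R_j$, the alternative expression $b_1=\frac{1}{2\pi I}\int_{C_j} f\left(p\right)dp$ yields $\int_{C_j} f\left(p\right)dp=2\pi I\,Res(f,p_j)$ by the very definition of the residue as $b_1$. Summing over $j$ and extracting the common left factor $2\pi I$ gives the claimed identity.

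The one point that genuinely requires care, as opposed to the routine winding-number bookkeeping, is the noncommutativity of the quaternion product: the factor $2\pi I$ must be kept consistently as a \emph{left} factor throughout. What makes the manipulations legitimate is the combination of facts already established earlier, namely that $I$ may be treated as a constant under differentiation and integration with respect to $p$ (so that $\frac{1}{2\pi I}=-\frac{I}{2\pi}$ and $2\pi I\cdot\frac{1}{2\pi I}=1$), and that for H-holomorphic functions the left and right quotients coincide. Provided these are applied consistently, the reduction to circle integrals and their evaluation as $2\pi I$ times a residue go through exactly as in the complex case.
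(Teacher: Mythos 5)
Your proposal is correct and follows essentially the same route as the paper: surround each pole by a small positively oriented H-circle, apply the generalised Cauchy Theorem (Theorem \ref{th3.4}) to the collection consisting of $\gamma$ and the reversed circles (the paper equivalently reverses $\gamma$ and keeps the circles positive), verify the winding-number hypothesis at points outside $D$ and at the poles, and then evaluate each circle integral as $2\pi I\,Res(f,p_j)$ via the Laurent coefficient formula $b_1$ from Theorem \ref{th4.2}. Your added caution about keeping $2\pi I$ consistently as a left factor is a sensible remark but does not change the argument.
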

\begin{proof}
The proof is an  application of the Generalised Cauchy Theorem (Theorem \ref{th3.4}). 
Since $D$ is open, for each $ j=1,\dots ,n$, we can find nonintersecting circles $$S_{j}\left(t\right)=p_{j}+\varepsilon_{j}e^{It},\,t\in\left[0,2\pi\right]\!,$$ centred at $p_{j}$ and of radii $\varepsilon_{j},$ each described once anticlockwise, such that $S_{j}$ and the points inside  $S_{j}$ lie in $D$ and such that  $S_{j}$ contains no singularity other than $p_{j}$.

Let $D^{'}=D\setminus \left\{p_{1},\dots , p_{n}\right\}.$ We claim that the collection of paths $$-\gamma,S_{1},\dots ,S_{n}$$ satisfy the hypotheses of Theorem \ref{th3.4} with respect to $D^{'}:$ i.e. their winding numbers sum to zero for every point not in $D^{'}.$

To see this, first note that $$\omega\left(- \gamma, p \right)=\omega\left(S_{j}, p \right)=0\,\,\,\text{for}\,p \notin D.$$ Hence the hypotheses of the Generalised Cauchy Theorem hold for points $p$ not in $D.$ It
remains to consider points in $D$ that are not in $D^{'},$ i.e. the poles $p_{j}.$ 

Since each pole $p_{j}$ lies inside $\gamma$, we have that $$\omega \left(-\gamma,p_{j}\right)=-\omega \left(\gamma,p_{j}\right)=-1.$$ Moreover,
\begin{equation*} \omega \left(S_{k},p_{j}\right) =  \left\{
\begin{aligned}
0 \,\,\,\text{if}\,\,\,k\neq j\\
1\,\,\,\text{if}\,\,\,k= j
\end{aligned}
\right. 
\end{equation*}
and we have when considering each $p_{j}$ one circular anticlockwise contour $S_{j}$ for which  $p_{j}$ lies inside it and the other anticlockwise contours $S_{k}, k\neq j$, for which $p_{j}$ lies outside them (see, for example, a similar path in Fig.\! 1, d)), i.e. $\omega\left(S_{j}, p_{j}\right)=1$ and $\omega\left(S_{k}, p_{j}\right)=0$.

Hence $$\omega\left(- \gamma, p_{j} \right)+\omega\left(S_{1}, p_{j} \right)+\cdots +\omega\left(S_{n}, p_{j} \right)=0.$$
Then, by the Generalised Cauchy Theorem, $$\int_{-\gamma}f\left(p\right)dp +\int_{S_{1}}f\left(p\right)dp+\cdots +\int_{S_{n}}f\left(p\right)dp=0.$$
By Laurent’s Theorem we have that $$Res\left(f\left(p\right),p_{j}\right)= \frac{1}{2\pi I}\int_{S_{j}}f\left(p\right)\,dp.$$ 

Hence
\begin{align*}
\int_{\gamma}f\left(p\right)dp&=\int_{S_{1}}f\left(p\right)dp+\cdots +\int_{S_{n}}f\left(p\right)dp\\
&=2\pi I\left[Res \left(f\left(p\right),p_{1}\right)+\cdots +Res \left(f\left(p\right),p_{n}\right)\right].
\end{align*}
The proof of quaternionic Cauchy’s Residue Theorem is completed. 
\end{proof}

 \hspace{20mm}

We have shown that  the basic principles of  applying the concept of C- and  H-representations to integration expressions give the theory of the quaternionic integration similar to the complex one.  Extending the concept of C- and H-representations of holomorphic functions to all  expressions containing holomorphic functions can be used to obtain other results of a quaternionic analysis identical to the complex one.

  \label{sec:intro}
\bibliography{bibliography}
\end{document}